\pgfplotsset{compat=1.13}
\newtheorem{thm}{Theorem}
\newtheorem{prop}[thm]{Proposition}
\newcounter{concounter}
\newtheorem{conprop}[concounter]{Proposition}
\renewcommand{\theconcounter}{\Alph{concounter}}
\newtheorem{lemma}[thm]{Lemma}
\newtheorem{example}[thm]{Example}
\newtheorem{remark}[thm]{Remark}
\newcommand{\R}{\mathrm{I\hspace{-0.5ex}R}}
\newcommand{\PG}{{\text{PG}}}
\newcommand{\Gas}{{\text{Gas}}}
\renewcommand{\Re}{\mathit{Re}}
\newcommand{\pu}{{\text{p.u.}}}
\newcommand{\va}{\phi} 
\newcommand{\fuel}{\varepsilon}
\renewcommand{\todo}[2][]{\tikzexternaldisable\@todo[#1]{#2}\tikzexternalenable}
\renewcommand{\thefootnote}{\fnsymbol{footnote}}
\begin{document}


\title{Modeling and simulation of gas networks coupled to power grids}
\author{E. Fokken\footnotemark[1], \; S. G\"ottlich\footnotemark[1], \; O. Kolb\footnotemark[1]}  

\footnotetext[1]{University of Mannheim, Department of Mathematics, 68131 Mannheim, Germany (\{fokken,goettlich,kolb\}@uni-mannheim.de).}

\date{\today}

\maketitle

\begin{abstract}
\noindent
In this paper, a mathematical framework for the coupling of gas networks to electric grids is presented to describe 
in particular the transition from gas to power.
The dynamics of the gas flow are given by the isentropic Euler equations, 
while the power flow equations are used to model the power grid. 
We derive pressure laws for the gas flow that allow for the well-posedness of the coupling and a rigorous
treatment of solutions. For simulation purposes, we apply appropriate numerical methods 
and show in a experimental study how gas-to-power might influence the dynamics of the gas and power network, respectively.
\end{abstract}

\textbf{AMS Classification:} 35L65, 65M08

\textbf{Keywords:} Gas networks, pressure laws, power flow equations, simulation

\renewcommand*{\thefootnote}{\arabic{footnote}}

\section{Introduction}
Power generation is changing drastically with more and more available energy coming from renewable sources.  Most of these, like wind and solar power, are much more volatile than traditional sources.  The problems of incorporating them into the power system have sparked much research.  Due to their flexibility in comparison to other power plants, gas turbines and gas engine power plants are often proposed as a means to balance power demands that cannot be met with renewable power sources at a given time.  The repercussions on the gas pipeline networks by this balancing have been studied~\cite{CHERTKOV2015541} and also joint optimal control of gas and power networks are investigated~\cite{ZengFangLiChen2016,Zlotnik2016}.  But so far only steady-state flow of the gas dynamics is examined.
Although this is often sufficient, the comparably low acoustic speed allows for short term effects
that cannot be seen with steady state methods.  

We propose a fully time-dependent gas model on a pipeline network coupled to a power network. Gas networks have been studied during the last two decades and questions of node conditions have been analyzed from a theoretical point of view, see for example
\cite{coupling_conditions_isothermal,banda_herty_klar,bressan_canic_garavello_herty_piccoli,brouwer_gasser_herty,doi:10.1137/060665841}.
Therein, a system of conservation or balance laws based on the Euler equations is used and the concept of Lax curves \cite{leveque2002finite} allows for a discussion on well-posedness of solutions. Finite volume or finite difference methods are typically applied to solve the network problem numerically \cite{GYRYA201934,doi:10.1137/1.9781611973693,KolbLangBales2010}.    
Power grids are mainly described by the power flow equations consisting of a nonlinear system of equations to
determine power and voltage, see for example \cite{doi:10.1137/1.9781611974164,powerflow}.

Although the power network could also be modeled with full time-dependency \cite{Gottlich2016}, the high signal speed of electricity renders this unnecessary.  Coupling of the networks happens at nodes common to both, where a gas-fired generator is placed such that it can convert gas to electricity, see \cite{aleksey_michael,Zlotnik2016}. The coupling is modeled by an equality constraint relating generated power to consumed gas flow. In section \ref{sec:gasnet} and \ref{sec:coupling-power-grids}, we derive conditions on the gas pressure and power demand, respectively, that guarantee that the coupling conditions lead to well-posedness of the gas-power coupling. For the numerical study in section \ref{sec:numerics}, we first give some validation results for the proposed discretizations, then we present the influence of different pressure laws and lastly we showcase the solution of a coupled gas-power system.


\section{Gas networks}
\label{sec:gasnet}

Gas networks have been investigated very intensively during the last ten years, see for example \cite{banda_herty_klar,brouwer_gasser_herty,KolbLangBales2010}. Coupling conditions at nodes
have been established to ensure well-posedness of the network solution~\cite{doi:10.1137/060665841} and a rigorous numerical treatment \cite{Egger2018,GYRYA201934,KolbLangBales2010,zhou_adewumi}. 
As underlying structure for gas networks we consider a directed graph with nodes $V_\Gas$ and arcs $E_\Gas$. 
Nodes as well as arcs may have state variables, which are in general time-dependent. We briefly recall the common setting.

In each pipe $e\in E_\Gas$, the gas flow is described by the isentropic Euler equations in one space dimension:
\begin{equation}\label{eq:gas}
    \begin{pmatrix}\rho\\  q\end{pmatrix}_t + \begin{pmatrix} q\\ p(\rho) + \frac{ q^2}{\rho}\end{pmatrix}_x = \begin{pmatrix}0\\S(\rho, q) \end{pmatrix},
\end{equation}
where $t \in \R^+$ is the time, $x \in [0,l_e]$ is the position along the pipeline of length~$l_e$, $\rho$ is the density of the gas, $q$ is the momentum of the gas, $p$ is the pressure and $S$ includes source terms that influence the momentum of the gas.
The pressure law must be a function of the form
\begin{equation*}
  \begin{aligned}
    p \in C^1(\R^+, \R^+),\quad
    p^\prime(\rho) > 0 \text{ for all }\rho \in \R^+,
\end{aligned}
\end{equation*}
to ensure strict hyperbolicity.  Usual examples are the isothermal pressure law
\begin{equation}
p(\rho) = c^2\rho ,
\end{equation}
where $c$ is the acoustic speed of the gas, or, more general, the $\gamma$-law
\begin{equation}\label{eq:gammalaw}
p(\rho) = \kappa \rho^\gamma, 
\end{equation}
for suitable constants $\kappa$ and $\gamma$, which we examine in Proposition \ref{prop:general_gamma}. 
In fact, we show that the class of possible pressure laws can be enlarged, leading to non-standard pressure functions.
%
%
From literature we know that the eigenvalues $\lambda$ and the eigenvectors $r$ for the system~\eqref{eq:gas}
are given by
\begin{align*}
    \lambda_1(\rho, q) = \frac{q}{\rho} - \sqrt{p^\prime(\rho)}, \quad \lambda_2(\rho, q) = \frac{q}{\rho} + \sqrt{p^\prime(\rho)}, \\[10pt]
  r_1(\rho, q) = \begin{pmatrix} -1 \\ -\lambda_1(\rho, q) \end{pmatrix}, \quad r_2(\rho, q) = \begin{pmatrix} 1 \\ \lambda_2(\rho, q) \end{pmatrix}\ .
\end{align*}
At the (gas) network inflow boundaries, typically the pressure $p_{\text{in}}$ is prescribed and at the outflow boundaries the flow $q_{\text{out}}$ is given.
Further, the following coupling conditions, an introduction to which can be found in \cite{banda_herty_klar}, are used at all inner nodes of $V_\Gas$:
\begin{itemize}
\item equality of pressure: the pressure values at the ends of all arcs connected to the same node must be equal, that is, there is a coupling pressure $p_\text{coupling}$ such that
  \begin{equation}
    \label{eq:pressure_coupling}
    p_e = p_\text{coupling}
  \end{equation}
  at the end of all arcs $e$ connected to the junction.  This condition is equivalent to a condition
    \begin{equation*}
    \rho_e = \rho_\text{coupling}\ ,
  \end{equation*}
because $p$ as a strictly increasing function is one-to-one.
\item conservation of mass: the sum of all incoming fluxes must equal the sum of all outgoing fluxes (including eventual source/sink terms), i.e.,
  \begin{equation}
    \label{eq:flux_conservation}
    \sum_{\text{incoming pipes}}q_{\text{pipe}}=\sum_{\text{outgoing pipes}}q_{\text{pipe}}
  \end{equation}
\end{itemize}

\subsection{Well-posedness of the Riemann problem for the isentropic Euler equation}\label{sec:analysisOutflowBounds}
In this section, we investigate conditions for the pressure function~\eqref{eq:gammalaw} and the flows that guarantee well-posedness of the Riemann problem for the isentropic Euler equations without a source term, see \cite{bressan_canic_garavello_herty_piccoli} for an introduction.  By the front-tracking technique this is sufficient for the well-posedness of the isentropic Euler equations, see \cite{gugat_herty_klar_et_al}.
Assuming strict hyperbolicity, we start with the assumption $p^\prime>0$.  We also assume that the initial data $U_l = (\rho_l,q_l)$ and $U_r = (\rho_r,q_r)$ of the Riemann problem is sub-sonic, i.e.,
\begin{equation}
  \label{eq:39}
  \abs{\frac{q}{\rho}} < c(\rho) = \sqrt{p^\prime(\rho)}.
\end{equation}
A solution to the Riemann problem with left state $U_l$ and right state $U_r$ is given by an intersection point of the Lax curves through these two points, again see \cite{bressan_canic_garavello_herty_piccoli}.
The Lax curve of all states $U=(\rho,q)$ reachable via $1$-rarefaction or $1$-shock (that is, waves corresponding to $\lambda_1$ above) from a left state $U_l = (\rho_l,q_l)$ is given by (taken from \cite{doi:10.1137/060665841})
\begin{equation}\label{eq:L_l}
  L_l(\rho;\rho_l,q_l) =\begin{cases}
    \rho\left(\frac{q_l}{\rho_l} +\int_{\rho}^{\rho_l} \frac{c(s)}{s} \dif s \right) &\text{ for }\rho\phantom{_l} \leq \rho_l \text{ (rarefaction)}\\[2ex]
    \rho \frac{q_l}{\rho_l} - \sqrt{f(\rho,\rho_l)}\ &\text{ for }\rho_l \leq \rho\phantom{_l} \text{ (shock)},
  \end{cases}
\end{equation}
where $f$ is defined by
\begin{equation}\label{eq:43}
  \begin{aligned}
    f(\rho,\rho_l) = a(\rho,\rho_l)\Delta p(\rho,\rho_l) \quad \mbox{with }\;
    a(\rho,\rho_l)= \frac{\rho}{\rho_l}(\rho-\rho_l), \;
    \Delta p(\rho,\rho_l) = p(\rho)-p(\rho_l)\ 
  \end{aligned}
\end{equation}
such that $U = (\rho,L_l(\rho))$ holds.

Analogously, the Lax curve of all states $U=(\rho,q)$ reachable via $2$-rarefaction or $2$-shock from a right state $U_r = (\rho_r,q_r)$ is given by
\begin{equation}\label{eq:L_r}
  L_r(\rho;\rho_r,q_r) =\begin{cases}
    \rho\left(\frac{q_r}{\rho_r} - \int_{\rho}^{\rho_r} \frac{c(s)}{s} \dif s \right) &\text{ for }\rho\phantom{_r} \leq \rho_r\text{ (rarefaction)}\\[2ex]
    \rho \frac{q_r}{\rho_r} + \sqrt{f(\rho,\rho_r)}\ &\text{ for }\rho_r \leq \rho\phantom{_r}\text{ (shock)}.
  \end{cases}
\end{equation}
Whenever we write $L_l(\rho)$ and $L_r(\rho)$, it will mean $L_l(\rho,\rho_l,q_l)$ and $L_r(\rho,\rho_r,q_r)$, respectively.
Beware that later in section \ref{sec:extension-junctions} different second and third arguments will appear and we will write them out again.

A solution is then at a point $\rho$, where
\begin{equation}
  \label{eq:10}
  L_l(\rho)-L_r(\rho) = 0\ .
\end{equation}
An example of this is shown in figure \ref{fig:lax_curves}.
\begin{figure}[h]
  \centering
  \includegraphics{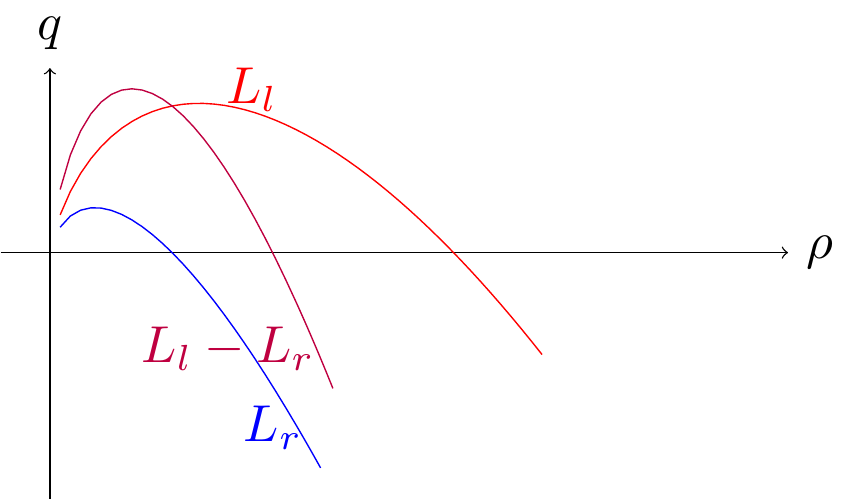}
  \caption{Typical Lax curves with pressure law $p(\rho) = c^2\rho$.  The zero of the purple curve is the desired solution.}
  \label{fig:lax_curves}
\end{figure}

Note that
\begin{equation}
  \label{eq:9}
  - L_r(\rho;\rho_r,q_r) = L_l(\rho;\rho_r,-q_r)\ .
\end{equation}
So $L_l$ and $-L_r$ share most properties.  Therefore equation \eqref{eq:10} is best understood not as a difference of two functions but as a sum of two very similar functions.
Similar to \cite{aleksey_michael}, we show certain properties of the Lax curves
that are used to define the new pressure laws.  There will appear three propositions, which are numbered by A, B and C. These propositions present lists of conditions that are closely related.  To make things tractable, each list bears the same numbering but has the letter of the corresponding proposition in front.  For example, the conditions A1, B1 and C1 all govern concavity of the Lax curves.

 \stepcounter{thm}
 \begin{conprop}[Proposition \thethm]
\label{prop:Lax_curves}
  Let $L\in C^2(\mathbb{R}^+, \mathbb{R}^+)$ and let it fulfill conditions \ref{item:concave} through \ref{item:positive} (where only one of \ref{item:a_positive} and \ref{item:b_positive} must hold).

  \begin{enumerate}[label=\normalfont{(\theconcounter\arabic*)},ref=\normalfont{\theconcounter}\arabic*]
  \item $L^{\prime\prime}\leq 0$,\label{item:concave}
  \item $\exists \rho>0: L(\rho) <0$,\label{item:negative}
  \item \label{item:positive}\begin{enumerate}[label=(\alph*), ref=\normalfont{\theconcounter}\ensuremath{\arabic{enumi}}(\alph{enumii})]
    \item $0 <\lim_{\rho \to 0}L(\rho) <\infty$ or \label{item:a_positive}
    \item $\lim_{\rho \to 0} L(\rho) = 0$ and $\lim_{\rho \to 0}L^\prime(\rho) > 0$.\label{item:b_positive}
    \end{enumerate}
  \end{enumerate}
  Then there is a unique $\rho>0$ with $L(\rho)=0$ such that for all $\hat{\rho}>\rho$ holds $L(\hat{\rho})<0$ and $L(\rho)\to -\infty$ for $\rho \to \infty$.
\begin{proof}
  Condition \ref{item:positive} ensures positive values near $0$, condition \ref{item:negative} ensures negative values for some $\rho>0$, together yielding a zero in between and condition \ref{item:concave} makes $L$ concave, guaranteeing uniqueness of the zero and implying then that $L(\rho) \to -\infty$ for $\rho \to \infty$.
\end{proof}

\end{conprop}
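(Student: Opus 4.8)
The plan is to prove the four sub-claims in order: (i) $L$ is strictly positive on a right-neighborhood of $0$; (ii) $L$ vanishes somewhere on $\mathbb{R}^+$; (iii) at its smallest zero $\rho^\ast$ one has $L'(\rho^\ast)<0$; and (iv) concavity then pins $L$ below the tangent line at $\rho^\ast$, which simultaneously gives uniqueness of the zero and the divergence $L(\rho)\to-\infty$.

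For (i) I split according to condition \ref{item:positive}. Under \ref{item:a_positive}, $L(\rho)$ tends to a finite positive limit as $\rho\to0$, so continuity gives $L>0$ on some interval $(0,\delta)$. Under \ref{item:b_positive}, $L(\rho)\to0$ and $L'(\rho)\to m>0$ as $\rho\to0$, hence $L'>0$ on some $(0,\delta)$ and $L(\rho)=\int_0^\rho L'(s)\,\dif s>0$ there. For (ii), condition \ref{item:negative} yields $\rho_1>0$ with $L(\rho_1)<0$; since $L$ is continuous, positive on $(0,\delta)$ and negative at $\rho_1$, the intermediate value theorem produces a zero. The zero set $Z=\{\rho>0:L(\rho)=0\}$ is therefore non-empty and closed, so $\rho^\ast:=\inf Z$ lies in $Z$; moreover $\rho^\ast>0$ and $L>0$ on $(0,\rho^\ast)$, because $L>0$ near $0$ and any sign change inside $(0,\rho^\ast)$ would force a zero there, contradicting minimality of $\rho^\ast$.

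For (iii) I argue by contradiction. Condition \ref{item:concave} makes $L$ concave, so $L$ lies below each of its tangents: $L(\rho)\le L(\rho^\ast)+L'(\rho^\ast)(\rho-\rho^\ast)=L'(\rho^\ast)(\rho-\rho^\ast)$ for all $\rho>0$. If $L'(\rho^\ast)\ge0$, then evaluating this at $\rho<\rho^\ast$ gives $L(\rho)\le0$, contradicting $L>0$ on $(0,\rho^\ast)$; hence $L'(\rho^\ast)<0$. Finally, for (iv), the same tangent inequality evaluated at $\hat\rho>\rho^\ast$ gives $L(\hat\rho)\le L'(\rho^\ast)(\hat\rho-\rho^\ast)<0$, so $\rho^\ast$ is the unique zero, and letting $\hat\rho\to\infty$ on the right-hand side forces $L(\rho)\to-\infty$.

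I do not anticipate a real obstacle: the argument is elementary once one decides to work with the smallest zero and to exploit concavity through the supporting-tangent inequality rather than through monotonicity of $L'$ alone. The only mildly delicate point is case \ref{item:b_positive}, where $L$ itself vanishes at $0$ and positivity near $0$ must be read off from the sign of the limiting derivative; and one should note that the $C^2$ concavity hypothesis is used twice, once to exclude $L'(\rho^\ast)\ge0$ and once to propagate negativity to the right of $\rho^\ast$.
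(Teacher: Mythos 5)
Your proof is correct and follows essentially the same route as the paper's (much terser) argument: positivity near $0$ from \ref{item:positive}, negativity from \ref{item:negative}, a zero in between by continuity, and concavity from \ref{item:concave} to obtain uniqueness and $L(\rho)\to-\infty$. Your use of the supporting-tangent inequality at the smallest zero is just a careful spelling-out of what the paper leaves implicit, so no further comment is needed.
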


Note if two functions $L_1$, $L_2$ satisfy the prerequisites of Proposition \ref{prop:Lax_curves}, so does $L_1 + L_2$.  Therefore we search for conditions on pressure functions that make $L_l$ and $-L_r$ satisfy the conditions of Proposition \ref{prop:Lax_curves}, as then their sum in equation \eqref{eq:10} will, too, and so it will have a unique zero.  

Our main result is the following.  It generalizes the usual notion of $\gamma$-laws to allow for $\gamma<1$. Further, as we will show below (Proposition~\ref{prop:sufficient}), positive linear combinations of such valid pressure laws again lead to valid pressure laws. 
\begin{prop}[generalized $\gamma$-laws]\ \label{prop:general_gamma}
  Let the derivative of the pressure be given by
  \begin{equation*}
    p^\prime(\rho) = \alpha \rho^{\delta}.
  \end{equation*}
  This translates to pressure functions given by
  \begin{equation*}
    p(\rho) = \frac{\alpha}{\gamma}\rho^\gamma+\text{const}
  \end{equation*}
  for $\gamma \ne 0$ and
  \begin{equation*}
    p(\rho) = \alpha\log(\rho)+\text{const}\ .
  \end{equation*}
  For these pressure functions there holds
  \begin{itemize}
  \item The Lax curves through any sub-sonic initial states $U_l$, $U_r$ have a unique intersection point at some $\rho>0$ if and only if $\abs{\delta}\leq 2$ and $\alpha >0$.
  \item For every $\delta$ with $\abs{\delta}>2$ there are sub-sonic states $U_l$, $U_r$ such that the Lax curves have no intersection at all.
  \end{itemize}
\end{prop}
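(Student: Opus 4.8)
The plan is to verify, for every sub-sonic state $(\rho_0,q_0)$ and every $\delta\in[-2,2]$ with $\alpha>0$, that the Lax curve $L:=L_l(\,\cdot\,;\rho_0,q_0)$ satisfies conditions \ref{item:concave}, \ref{item:negative}, \ref{item:positive} of Proposition~\ref{prop:Lax_curves}; by the identity~\eqref{eq:9} the same then holds for $-L_r(\,\cdot\,;\rho_r,q_r)=L_l(\,\cdot\,;\rho_r,-q_r)$ (note $(\rho_r,-q_r)$ is sub-sonic iff $(\rho_r,q_r)$ is), and by the remark after Proposition~\ref{prop:Lax_curves} also for their sum $L_l-L_r$, which then has a unique zero --- the sought intersection point, with the decay to $-\infty$ coming for free. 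Set $v_0=q_0/\rho_0$, $c(\rho)=\sqrt\alpha\,\rho^{\delta/2}$, $\gamma=\delta+1$. For $\delta\ne 0$ the rarefaction integral in~\eqref{eq:L_l} equals $\tfrac{2}{\delta}\bigl(c(\rho_0)-c(\rho)\bigr)$, so on $0<\rho\le\rho_0$
\[
  L(\rho)=v_0\rho+\tfrac{2}{\delta}c(\rho_0)\rho-\tfrac{2}{\delta}\sqrt\alpha\,\rho^{1+\delta/2},\qquad
  L''(\rho)=-\tfrac{2+\delta}{2}\sqrt\alpha\,\rho^{\delta/2-1}\le 0,
\]
the last inequality being exactly $\delta\ge -2$ (with the usual logarithmic replacements when $\delta=0$); this settles \ref{item:concave} on the rarefaction branch and, reading off limits, settles \ref{item:positive}: for $0<\delta\le 2$, $L(0^+)=0$ and $L'(0^+)=v_0+\tfrac{2}{\delta}c(\rho_0)$, which, since $v_0>-c(\rho_0)$ and $\delta\le 2$, is strictly positive --- giving~\ref{item:b_positive}; for $-2<\delta\le 0$, $L(0^+)=0$ and $L'(0^+)=+\infty$, again~\ref{item:b_positive}; and for $\delta=-2$, $L(0^+)=\sqrt\alpha\in(0,\infty)$, giving~\ref{item:a_positive}. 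For \ref{item:negative} I would use the shock branch, where $L(\rho)=v_0\rho-\sqrt{f(\rho,\rho_0)}$ and, for $v_0>0$, $L(\rho)<0$ is equivalent to $v_0^2<h(\rho):=\Delta p(\rho,\rho_0)(\rho-\rho_0)/(\rho\rho_0)$ (while $L(\rho)<0$ is automatic for $v_0\le 0$); one checks $h$ increases on $(\rho_0,\infty)$ with supremum $+\infty$ if $p$ is unbounded ($\delta\ge -1$) and supremum $c(\rho_0)^2/|\delta+1|\ge c(\rho_0)^2>v_0^2$ if $p$ is bounded ($-2\le\delta<-1$, using $|\delta+1|\le 1$ and sub-sonicity), so $L$ is eventually negative.

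The analytic core is condition \ref{item:concave} on the shock branch, i.e.\ convexity of $\rho\mapsto\sqrt{f(\rho,\rho_0)}$ for $\rho\ge\rho_0$. Writing $\phi=\rho(\rho-\rho_0)\,\Delta p$ so that $\sqrt f=\rho_0^{-1/2}\sqrt\phi$, convexity amounts to $2\phi\phi''-(\phi')^2\ge 0$, equivalently (with $u=\log\phi=\log\rho+\log(\rho-\rho_0)+\log\Delta p$) to $2u''+(u')^2\ge 0$; using $p''/\Delta p=(\gamma-1)p'/(\rho\,\Delta p)$ for the $\gamma$-law and the homogeneity of the resulting expression, this reduces, with $t=\rho_0/\rho\in(0,1)$, $B=\tfrac{1}{1-t}$, $D=\tfrac{\gamma}{1-t^\gamma}$, to the single-variable inequality
\[
  2B-1+2\gamma D\;\ge\;(B-D)^2\qquad\text{for all }t\in(0,1),
\]
which I would verify holds for every $\gamma\ge -1$ (hence for all $\delta\in[-2,2]$), with equality identically at $\gamma=-1$ --- there $\sqrt f$ is affine --- and with the log case $\gamma=0$ treated by the analogous reduction via $\log\log(\rho/\rho_0)$. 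One also records that $L$ is $C^2$ at $\rho=\rho_0$, which follows from Taylor expanding $f$ and matching $L''(\rho_0^\pm)=-\tfrac{\gamma+1}{2}c(\rho_0)/\rho_0$. I expect this one-variable estimate --- keeping its sign under control uniformly in $t$ and $\gamma$ --- together with the careful case-splitting to be the main obstacle; the rest is bookkeeping.

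For the converse I would not go through concavity but build the non-intersecting states from the asymptotics of $L_l$. If $\delta>2$, then $\tfrac{2}{\delta}<1$, so one may take sub-sonic $U_l$ with $v_l\in\bigl(-c(\rho_l),-\tfrac{2}{\delta}c(\rho_l)\bigr)$ and $U_r$ with $v_r\in\bigl(\tfrac{2}{\delta}c(\rho_r),c(\rho_r)\bigr)$; by the rarefaction formula above $L_l(\,\cdot\,;\rho_l,q_l)$ is concave on $(0,\rho_l]$ with $L_l(0^+)=0$ and $L_l'(0^+)=v_l+\tfrac{2}{\delta}c(\rho_l)<0$, hence negative there, while on the shock branch $L_l\le v_l\rho<0$; the same holds for $L_l(\,\cdot\,;\rho_r,-q_r)=-L_r$, so $L_l-L_r<0$ throughout and the Lax curves never meet. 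If $\delta<-2$, then $|\delta+1|>1$ and $p$ is bounded; taking $v_l\in\bigl(c(\rho_l)/\sqrt{|\delta+1|},c(\rho_l)\bigr)$ and $v_r\in\bigl(-c(\rho_r),-c(\rho_r)/\sqrt{|\delta+1|}\bigr)$, the estimate $h(\rho)<c(\rho_l)^2/|\delta+1|<v_l^2$ gives $L_l(\,\cdot\,;\rho_l,q_l)>0$ on the shock branch, and on the now-convex rarefaction branch it decreases from $+\infty$ to $\rho_l v_l>0$; again the same for $-L_r$, so $L_l-L_r>0$ throughout and there is no intersection. Finally, $\alpha>0$ is not an extra hypothesis but is forced by strict hyperbolicity $p'=\alpha\rho^\delta>0$, which is what makes $c=\sqrt{p'}$ and the Lax curves exist at all.
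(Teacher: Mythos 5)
Your proposal is correct in substance but takes a genuinely different route from the paper. The paper never touches the explicit Lax curves of the $\gamma$-law: it converts conditions \ref{item:concave}--\ref{item:positive} of Proposition \ref{prop:Lax_curves} into pressure-only conditions (Proposition \ref{prop:proposition_b}), simplifies them to the sufficient list of Proposition \ref{prop:sufficient} --- whose concavity part rests on the observation that $2ff''-(f')^2$ vanishes at $\rho=\rho_l$ and has derivative $2ff'''$, so $f'''\ge 0$ suffices (Proposition \ref{prop:B1_simpler}) --- and then checks that list for $p'(\rho)=\alpha\rho^\delta$ in two lines. You instead verify \ref{item:concave}--\ref{item:positive} directly on the explicit rarefaction and shock branches; your computations there are correct (the concavity threshold $\delta\ge-2$ on the rarefaction branch, the limits giving \ref{item:a_positive} at $\delta=-2$ and \ref{item:b_positive} otherwise with $L'(0^+)=v_0+\tfrac{2}{\delta}c(\rho_0)>0$ requiring $\delta\le 2$, negativity via the increasing function $h$ with supremum $c(\rho_0)^2/\abs{\delta+1}$), and your counterexample states for $\abs{\delta}>2$ coincide exactly with the paper's. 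The one step you leave open is the analytic core: shock-branch concavity, which you correctly reduce to $2B-1+2\gamma D\ge (B-D)^2$ with $B=1/(1-t)$, $D=\gamma/(1-t^\gamma)$, $t=\rho_0/\rho$. That inequality is indeed true for all $\gamma\ge-1$ (it is precisely $2ff''-(f')^2\ge0$ rewritten in the scale-invariant variable $t$, with identical vanishing at $\gamma=-1$ as you note), but your plan only asserts it, and proving it uniformly in $t$ and $\gamma$ by direct estimation is the genuinely hard part; the cheapest closure is the paper's own device: for the $\gamma$-law one computes $\rho_l f'''(\rho,\rho_l)=\alpha(\delta+2)\rho^{\delta-1}\bigl[(\delta+3)\rho-\delta\rho_l\bigr]\ge 0$ for $\rho\ge\rho_l$, $\delta\ge-2$, and integrating $\bigl(2ff''-(f')^2\bigr)'=2ff'''$ from $\rho_l$ yields your one-variable inequality at once. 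With that step filled in (plus the routine $C^2$-matching at $\rho_0$ you mention), your argument is complete; what it buys is a self-contained, explicit verification for this family, whereas the paper's detour through Propositions \ref{prop:proposition_b}--\ref{prop:sufficient} yields reusable criteria for general pressure laws, e.g.\ positive combinations and integrals of $\gamma$-laws.
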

Expressed in the usual form of $\gamma$-laws, this result means $p(\rho) = \kappa \rho^{\gamma}$ is a valid pressure function if and only if $0<\gamma<3$ and $\kappa>0$ or $-1<\gamma<0$ and $\kappa<0$. The proof of Proposition \ref{prop:general_gamma} will be given at the end of this section.
To prove Proposition \ref{prop:general_gamma} we will reformulate conditions \ref{item:concave} through \ref{item:positive} in Proposition \ref{prop:Lax_curves} in terms of the pressure function.

Because of equation \eqref{eq:9}, both Lax curves behave essentially identically and therefore we prove our findings for $L_l$ only as the proofs for $-L_r$ are the same.
Along the way the derivatives of the Lax curves will be important and so we provide them here,
\begin{equation}\label{eq:derivative_L_l}
  \begin{aligned}
  L_l^\prime(\rho) &=\begin{cases}
    \frac{q_l}{\rho_l}+ \int_{\rho}^{\rho_l}  \frac{c(s)}{s}\dif s - c(\rho)&\text{ for }\rho\phantom{_l} \leq \rho_l\\[2ex]
    \frac{q_l}{\rho_l} - \frac{f^\prime}{2\sqrt{f}}\ &\text{ for }\rho_l \leq \rho
  \end{cases}\\
  L_l^{\prime\prime}(\rho) &=\begin{cases}
     - (\frac{c(\rho)}{\rho} + c^\prime(\rho))\qquad\ \ \, \, &\text{ for }\rho\phantom{_l} \leq \rho_l\\[2ex]
     - \frac{2f^{\prime\prime}f- (f^\prime)^2}{4\sqrt{f}}&\text{ for }\rho_l \leq \rho\ ,
  \end{cases}\\
  \end{aligned}
\end{equation}
and
\begin{equation}\label{eq:derivative_L_r}
  \begin{aligned}
  L_r^\prime(\rho) &=\begin{cases}
  \frac{q_r}{\rho_r} - \int_{\rho}^{\rho_r}  \frac{c(s)}{s}\dif s +c(\rho)&\text{ for }\rho\phantom{_r} \leq \rho_r\\
     \frac{q_r}{\rho_r} + \frac{f^\prime}{2\sqrt{f}}\ &\text{ for }\rho_r \leq \rho
  \end{cases}\\
  L_r^{\prime\prime}(\rho) &=\begin{cases}
     \frac{c(\rho)}{\rho} + c^\prime(\rho)\qquad\qquad\ \,\, \, &\text{ for }\rho\phantom{_r} \leq \rho_r\\
     \frac{2f^{\prime\prime}f- (f^\prime)^2}{4\sqrt{f}}\ &\text{ for }\rho_r \leq \rho\ .
  \end{cases}
\end{aligned}
\end{equation}
Next, we intend to give conditions under which Proposition \ref{prop:Lax_curves} is applicable to $L_l$.  
Before we do so, we provide the following lemma:
\begin{lemma}
  \label{sec:condition-lemma}
  Let $g\in C^1(\mathbb{R}^+, \mathbb{R}^+)$ be a non-negative function, $g\geq0$, such that also $(\rho^2g(\rho))^\prime \geq 0$ for all $\rho>0$ and let $G$ be given by $G(\rho) = \int_\rho^{\rho_l}g(s)\dif s$.  Then, there holds
  $\rho^2g(\rho) \overset{\rho \to 0}{\longrightarrow} 0$ if and only if $\rho G(\rho) \overset{\rho \to 0}{\longrightarrow} 0$.
  \begin{proof}
    The proof can be found in Appendix \ref{sec:proofs-lemm-refs}.
  \end{proof}
\end{lemma}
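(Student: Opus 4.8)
The plan is to exploit the monotonicity hidden in the hypothesis. Since $(\rho^2 g(\rho))' \geq 0$, the auxiliary function $h(\rho) := \rho^2 g(\rho)$ is non-decreasing on $\mathbb{R}^+$ and, being non-negative, has a limit $L := \lim_{\rho \to 0} h(\rho) = \inf_{\rho>0} h(\rho) \geq 0$; in particular $h(\rho) \geq L$ for every $\rho>0$. The whole statement then reduces to showing $L = 0$ if and only if $\rho G(\rho) \to 0$, and throughout I would keep rewriting $g(s) = h(s)/s^2$.

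For the implication ``$h(\rho) \to 0 \Rightarrow \rho G(\rho) \to 0$'' I would use a standard $\varepsilon$-splitting of the integral. Fix $\varepsilon>0$ and pick $0 < \eta < \rho_l$ with $h(s) < \varepsilon$ for all $s \leq \eta$. For $\rho < \eta$ write $G(\rho) = \int_\rho^\eta h(s) s^{-2}\,\dif s + \int_\eta^{\rho_l} h(s) s^{-2}\,\dif s \leq \varepsilon(\tfrac1\rho - \tfrac1\eta) + C_\eta$, where $C_\eta$ is the finite, $\rho$-independent second integral. Multiplying by $\rho$ gives $0 \leq \rho G(\rho) \leq \varepsilon + \rho\, C_\eta$ (the lower bound because $g \geq 0$ and $\rho < \rho_l$), hence $\limsup_{\rho\to 0}\rho G(\rho) \leq \varepsilon$, and letting $\varepsilon \to 0$ finishes this direction.

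For the converse ``$\rho G(\rho) \to 0 \Rightarrow h(\rho) \to 0$'' I would argue by contraposition using the global lower bound $h \geq L$. If $L > 0$ then $g(s) = h(s)s^{-2} \geq L s^{-2}$, so $G(\rho) \geq L\int_\rho^{\rho_l} s^{-2}\,\dif s = L(\tfrac1\rho - \tfrac1{\rho_l})$ and therefore $\rho G(\rho) \geq L(1 - \rho/\rho_l) \to L > 0$, contradicting $\rho G(\rho) \to 0$. Hence $L = 0$, i.e.\ $\rho^2 g(\rho) \to 0$.

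The only genuinely delicate point is the first step: recognizing that the sign condition on $(\rho^2 g)'$ is precisely what guarantees that the limit $L$ exists and that $h \geq L$ everywhere. Once this monotonicity is in hand, both directions are routine estimates built on the integrability profile of $s^{-2}$, and no property of $\rho_l$ beyond its being a fixed positive number is needed. I would also remark that the $C^1$ regularity is used only to give meaning to the hypothesis $(\rho^2 g)' \geq 0$ and to deduce monotonicity of $h$ from it.
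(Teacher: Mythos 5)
Your proof is correct. The forward implication is the same $\varepsilon$-splitting of $\int_\rho^{\rho_l} g$ that the paper uses (its Lemma~\ref{lemma:1}, part~1, which indeed needs no monotonicity), but your treatment of the converse is organized differently and more economically. The paper deliberately postpones the hypothesis $(\rho^2 g)'\geq 0$: it first shows, by the same kind of contradiction estimate you use, only that $\rho G(\rho)\to 0$ forces $\liminf_{\rho\to 0}\rho^2 g(\rho)=0$ (Lemma~\ref{lemma:1}, part~2), and then invokes a separate lemma (Lemma~\ref{sec:well-posedn-isentr}) arguing via the derivative that, under monotonicity of $\rho^2 g$, a gap between $\liminf$ and $\limsup$ would force $\liminf(\rho^2g)'=-\infty$, so the $\liminf$ is actually a limit. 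You instead front-load the monotonicity: since $h(\rho)=\rho^2 g(\rho)$ is non-decreasing and non-negative, the limit $L=\lim_{\rho\to 0}h(\rho)=\inf h$ exists and $h\geq L$ globally, so $L>0$ immediately gives $\rho G(\rho)\geq L(1-\rho/\rho_l)\to L$, a one-step contradiction that removes all $\liminf/\limsup$ bookkeeping and the auxiliary derivative lemma. What your route buys is brevity and the avoidance of the paper's somewhat terse oscillation argument; what the paper's splitting buys is a statement (its Lemma~\ref{lemma:1}) valid without the monotonicity assumption, which is the part it actually reuses in the direction of Proposition~\ref{prop:proposition_b} where only $g\geq 0$ is available before concavity is imposed. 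Your closing remarks (only $\rho<\rho_l$ matters, and $C^1$ is needed merely to make sense of the hypothesis on $(\rho^2g)'$) are accurate.
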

The role of $g$ in Lemma \ref{sec:condition-lemma} will be played by $p^\prime(\rho)$ and $\frac{c(\rho)}{\rho}$ in the following.
Let us now put the focus on the Lax curve $L_l$ again.
The following Proposition \ref{prop:proposition_b} will turn conditions \ref{item:concave} through \ref{item:positive} in Proposition \ref{prop:Lax_curves} into conditions on the pressure function.
\stepcounter{thm}
\begin{conprop}[Proposition \thethm]
  \label{prop:proposition_b}
  Let $p\in C^2(\mathbb{R}^+)$ with $p^{\prime}>0$.  Then conditions \ref{item:concave}, \ref{item:negative}, \ref{item:positive} hold for $L_l$ for all $\rho_l$, $q_l$ with $\abs{\frac{q_l}{\rho_l}} <  \sqrt{p^\prime(\rho_l)}$ if and only if conditions \ref{item:B_concave} through \ref{item:B_positive} hold (again with only one of \ref{item:B_a_positive} and \ref{item:B_b_positive} fulfilled).
	
  \begin{enumerate}[label=\normalfont{(\theconcounter\arabic*)},ref=\normalfont{\theconcounter}\arabic*]
  \item These inequalities hold:\label{item:B_concave} 
    \begin{align}
        2p^\prime(\rho)+\rho p^{\prime\prime}(\rho)&\geq 0\ \forall \rho>0\label{eq:46}\\
        \Delta p^2 + a^2 \left( 2\Delta pp^{\prime\prime}-(p^\prime)^2 \right) +  \frac{1}{2}(a^2)^{\prime}(\Delta p^2)^{\prime}& \geq 0\ \forall \rho_l>0,\rho>\rho_l,\label{eq:47}
      \end{align}
        where the arguments $\rho$ and $\rho_l$ have been omitted for readability, see equation \eqref{eq:43}.
       \item \label{item:B_negative}Let $p_\infty = \lim_{\rho \to \infty} p(\rho) \in \mathbb{R} \cup \{\infty\}$.  For all $\rho >0$ there holds
       \begin{equation*}
         p_\infty  -\rho p^\prime(\rho) - p(\rho) \geq 0\ .
       \end{equation*} 
       \item\label{item:B_positive}
         \begin{enumerate}[label=(\alph*), ref=\normalfont{\theconcounter}\ensuremath{\arabic{enumi}}(\alph{enumii})]
    \item There is $p_0 >0$ such that $p(\rho) = -\frac{p_0}{\rho} + \underset{\rho \to 0}{o}\left( \frac{1}{\rho} \right)$ or \label{item:B_a_positive}
    \item $p(\rho) \in \underset{\rho \to 0}{o}\left( \frac{1}{\rho} \right)$ and $\lim_{\rho \to 0}\int_{\rho}^{\rho_l}  \frac{c(s)}{s}\dif s - c(\rho) - c(\rho_l) \geq 0$
      for all $\rho_l>0$.\label{item:B_b_positive}
    \end{enumerate}

     \end{enumerate}
  
     \begin{proof}\ 
       \begin{itemize}
       \item[\ref{item:B_concave} $\Leftrightarrow$ \ref{item:concave}:] The equivalence \ref{item:B_concave} $\Leftrightarrow$ \ref{item:concave} is immediate from the definitions, inequality \eqref{eq:46} is for the rarefaction part, inequality \eqref{eq:47} for the shock part.  Note that
         \begin{equation*}
            2f f^{\prime\prime}-(f^\prime)^2 = \Delta p^2 + a^2 \left( 2\Delta pp^{\prime\prime}-(p^\prime)^2 \right) +  \frac{1}{2}(a^2)^{\prime}(\Delta p^2)^{\prime}. 
         \end{equation*}
       \item[\ref{item:concave}, \ref{item:a_positive}$\Rightarrow$ \ref{item:B_concave},\ref{item:B_a_positive}:]
        Assume conditions \ref{item:concave} and \ref{item:a_positive}. Let $F(\rho) =
           \int_\rho^{\rho_l}\frac{c(s)}{s}\dif s$.
         \begin{equation}
           \label{eq:44}
           0<l = \lim_{\rho \to 0} L_l(\rho) =  \lim_{\rho \to 0} \rho \int_\rho^{\rho_l}\frac{c(s)}{s}\dif s =  \lim_{\rho \to 0} \rho F(\rho).
         \end{equation}
         Therefore $F(\rho) = \frac{l}{\rho} + \underset{\rho \to 0}{o}\left( \frac{1}{\rho} \right)$.
         Note now that
         \begin{equation*}
           \left( \rho^2 \frac{c(\rho)}{\rho} \right)^\prime = \frac{1}{2\sqrt{p^\prime(\rho)}}(2p^\prime(\rho)+ \rho p^{\prime\prime}(\rho))\geq 0
         \end{equation*}
         due to condition  \ref{item:concave}.  Lemma \ref{sec:condition-lemma} therefore shows that
         \begin{equation}
           \label{eq:48}
           \frac{c(\rho)}{\rho} = - F^{\prime}(\rho) = \frac{l}{\rho^2}+ \underset{\rho \to 0}{o}\left( \frac{1}{\rho^2} \right)
         \end{equation}
         and hence
         \begin{equation*}
           p^{\prime}(\rho) = c(\rho)^2= \frac{l^2}{\rho^2} + \underset{\rho \to 0}{o}\left( \frac{1}{\rho^2} \right)\ ,
         \end{equation*}
         which again with lemma \ref{sec:condition-lemma} yields
         \begin{equation*}
         p(\rho) = -\frac{l^2}{\rho} + \underset{\rho \to 0}{o}\left( \frac{1}{\rho} \right)\ .
       \end{equation*}
     \item[\ref{item:concave}, \ref{item:a_positive}$\Leftarrow$ \ref{item:B_concave},\ref{item:B_a_positive}:] Assume now conditions \ref{item:B_concave} and \ref{item:B_a_positive}.  We now note that $\left( \rho^2 p^\prime(\rho) \right)^\prime = \rho\left( 2p^\prime(\rho)+ \rho p^{\prime\prime}(\rho) \right)\geq 0$, because of \ref{item:B_concave} and use the lemmas to  arrive at \ref{item:a_positive}.
     \item[\ref{item:concave}, \ref{item:b_positive}$\Leftarrow$ \ref{item:B_concave},\ref{item:B_b_positive}:] We assume condition \ref{item:B_b_positive}.  The last proof (here we need again conditions \ref{item:concave} and \ref{item:B_concave} respectively) also shows that
       \begin{equation*}
         \lim_{\rho \to 0}L_l(\rho) = 0 \Leftrightarrow p \in \underset{\rho \to 0}{o}\left( \frac{1}{\rho} \right)\ .
       \end{equation*}
       For the derivative we find
       \begin{equation*}
         \lim_{\rho \to 0}L_l^\prime(\rho) = \lim_{\rho \to 0}\frac{q_l}{\rho_l}+ \int_{\rho}^{\rho_l}  \frac{c(s)}{s}\dif s - c(\rho)> \lim_{\rho \to 0}\int_{\rho}^{\rho_l}  \frac{c(s)}{s}\dif s - c(\rho) -c(\rho_l)
       \end{equation*}
       due to the sub-sonic condition.  This is non-negative due to condition \ref{item:B_b_positive}.
     \item[\ref{item:concave}, \ref{item:b_positive}$\Rightarrow$ \ref{item:B_concave},\ref{item:B_b_positive}:] Assume that \ref{item:B_b_positive} does not hold, that is
       \begin{equation*}
         \lim_{\rho \to 0}\int_{\rho}^{\rho_l}  \frac{c(s)}{s}\dif s - c(\rho) -c(\rho_l)\leq -\delta<0\ .
       \end{equation*}
       Choosing $\rho_l,q_l$ such that $\frac{q_l}{\rho_l} = -c(\rho_l)+\frac{1}{2}\delta$ yields
       \begin{equation*}
         \lim_{\rho \to 0}L_l^\prime(\rho)\leq -\frac{1}{2}\delta<0
       \end{equation*}
     \item[\ref{item:negative}$\Leftarrow$\ref{item:B_negative}:] First of all we note that the limit exists, as $p^\prime >0$.  For $\rho>\rho_l$ we find
       \begin{equation*}
       L_l(\rho) = \rho\left( \frac{q_l}{\rho_l} - \sqrt{\frac{f(\rho,\rho_l)}{\rho^2}} \right) = \rho\left( \frac{q_l}{\rho_l} - \sqrt{\frac{1}{\rho_l}\left( 1-\frac{\rho_l}{\rho} \right)\left( p(\rho)-p(\rho_l) \right)} \right)\ .
     \end{equation*}
     This is less than zero if and only if the bracket is less then zero.  If now $p_\infty = \infty$, this is fulfilled for some $\rho>\rho_l$.  If $p_\infty< \infty$, we have the limit
     \begin{equation*}
       \lim_{\rho \to \infty}\sqrt{\frac{1}{\rho_l}\left( 1-\frac{\rho_l}{\rho} \right)\left( p(\rho)-p(\rho_l) \right)} = \sqrt{\frac{p_\infty - p(\rho_l)}{\rho_l}}\ .
     \end{equation*}
     Let now $\delta$ and $\epsilon_\rho$ be defined by
     \begin{equation*}
       \begin{aligned}
         \frac{q_l}{\rho_l} + \delta &= c(\rho_l)\\
       \sqrt{\frac{1}{\rho_l}\left( 1-\frac{\rho_l}{\rho} \right)\left( p(\rho)-p(\rho_l) \right)} &= \sqrt{\frac{p_\infty - p(\rho_l)}{\rho_l}} - \epsilon_\rho\ .
     \end{aligned}
     \end{equation*}
     Then $\delta >0$ as the state is sub-sonic and $\epsilon_\rho \to 0$.  Therefore let $\rho$ be so great that $\delta - \epsilon_\rho>0$.  Then, we find
     \begin{equation*}
       \begin{aligned}
         \frac{q_l}{\rho_l} &<   \frac{q_l}{\rho_l} + \delta -\epsilon_\rho= c(\rho_l) -\epsilon_\rho
         \leq \sqrt{\frac{p_\infty - p(\rho_l)}{\rho_l}}  -\epsilon_\rho \\
         & = \sqrt{\frac{1}{\rho_l}\left( 1-\frac{\rho_l}{\rho} \right)\left( p(\rho)-p(\rho_l) \right)} 
       \end{aligned}
     \end{equation*}
     which implies \ref{item:negative}.
   \item[\ref{item:negative}$\Rightarrow$\ref{item:B_negative}:]  In case $p_\infty = \infty$ condition \ref{item:B_negative} is fulfilled.  Let now $p_\infty<\infty$ and let there be for all sub-sonic $(\rho_l,q_l)$ a $\rho_->0$ such that
     \begin{equation}
       \label{eq:2}
       \frac{q_l}{\rho_l} < \sqrt{\frac{1}{\rho_l}\left( 1-\frac{\rho_l}{\rho_-} \right)\left( p(\rho_-)-p(\rho_l) \right)}\ ,
     \end{equation}
     that is, let \ref{item:negative} be true. Note that the right-hand-side is increasing in $\rho_-$ because its derivative is positive.  Therefore taking suprema of \eqref{eq:2} yields
       \begin{equation*}
         c(\rho_l)  \leq \sqrt{\frac{p_\infty - p(\rho_l)}{\rho_l}} \quad \Rightarrow \quad p^\prime(\rho_l)  \leq \frac{p_\infty - p(\rho_l)}{\rho_l}.  \qedhere
       \end{equation*}
     \end{itemize}      
     \end{proof}
\end{conprop}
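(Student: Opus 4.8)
The plan is to prove the equivalence one condition at a time, matching \ref{item:concave} with \ref{item:B_concave}, \ref{item:negative} with \ref{item:B_negative}, and \ref{item:positive} with \ref{item:B_positive}, always working from the explicit first and second derivatives of $L_l$ in \eqref{eq:derivative_L_l} and splitting into the rarefaction branch $\rho\le\rho_l$ and the shock branch $\rho\ge\rho_l$. Two structural remarks guide the bookkeeping of the quantifier ``for all sub-sonic $\rho_l,q_l$''. First, $q_l$ enters $L_l$ only through the affine term $\rho\,q_l/\rho_l$, so $L_l''$ does not see $q_l$ at all; as $\rho_l$ ranges over $\R^+$ the rarefaction branch sweeps every $\rho>0$, so the concavity requirement decouples completely from the data. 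Second, for \ref{item:negative} and for the derivative half of \ref{item:positive} the extremal sub-sonic slope is $q_l/\rho_l\to\pm c(\rho_l)$, and it is precisely this limiting (non-attained) slope that will convert the strict sub-sonic inequality into the non-strict inequalities in \ref{item:B_negative} and \ref{item:B_b_positive}.

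For concavity I would show $\ref{item:concave}\Leftrightarrow\ref{item:B_concave}$ branch by branch. On the rarefaction branch $L_l''=-(c(\rho)/\rho+c'(\rho))$, and since $c=\sqrt{p'}$ one computes $c(\rho)/\rho+c'(\rho)=(2p'(\rho)+\rho p''(\rho))/(2\rho\sqrt{p'(\rho)})$, so $L_l''\le0$ is equivalent to \eqref{eq:46}. On the shock branch $L_l''$ has the sign of $-(2ff''-(f')^2)$, and because $f>0$ there ($a>0$ and $\Delta p>0$ for $\rho>\rho_l$) the only real work is the algebraic identity $2ff''-(f')^2=\Delta p^2+a^2(2\Delta p\,p''-(p')^2)+\tfrac12(a^2)'(\Delta p^2)'$, after which $L_l''\le0$ is exactly \eqref{eq:47}. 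Letting $\rho_l$ vary gives \eqref{eq:46} for all $\rho>0$ and \eqref{eq:47} for all $\rho_l>0$, $\rho>\rho_l$.

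For negativity I would rewrite the shock branch as $L_l(\rho)=\rho\big(q_l/\rho_l-\sqrt{\tfrac1{\rho_l}(1-\rho_l/\rho)(p(\rho)-p(\rho_l))}\big)$, so $L_l(\rho)<0$ iff $q_l/\rho_l$ is below the root, whose argument increases in $\rho$ with supremum $\sqrt{(p_\infty-p(\rho_l))/\rho_l}$ (reading $p_\infty=\infty$ as $+\infty$). If $p_\infty=\infty$ both \ref{item:negative} and \ref{item:B_negative} hold trivially. If $p_\infty<\infty$, then \ref{item:negative} holds for every sub-sonic datum iff the extremal slope $c(\rho_l)$ does not exceed that supremum, i.e. $c(\rho_l)\le\sqrt{(p_\infty-p(\rho_l))/\rho_l}$, which upon squaring is $\rho_lp'(\rho_l)\le p_\infty-p(\rho_l)$, exactly \ref{item:B_negative}; the converse is the contrapositive, choosing a sub-sonic $q_l$ wedged between the supremum and $c(\rho_l)$ whenever \ref{item:B_negative} fails.

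The delicate part is positivity, $\ref{item:positive}\Leftrightarrow\ref{item:B_positive}$, and here I would assume concavity (already equivalent to \ref{item:B_concave}) so that Lemma~\ref{sec:condition-lemma} is available in both directions of the proof. Near $\rho=0$ only the rarefaction branch matters and the affine term drops, so $\lim_{\rho\to0}L_l(\rho)=\lim_{\rho\to0}\rho F(\rho)$ with $F(\rho)=\int_\rho^{\rho_l}\tfrac{c(s)}{s}\dif s$. Concavity supplies $(\rho^2\tfrac{c}{\rho})'\ge0$ and $(\rho^2p')'\ge0$, which are exactly the monotonicity hypotheses letting Lemma~\ref{sec:condition-lemma} transfer the behaviour at $0$ successively among $\rho F$, $\rho c=\rho^2\tfrac{c}{\rho}$, $\rho^2p'$ and $\rho p$. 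In the vanishing case this chain yields $\lim L_l=0\Leftrightarrow p\in o(1/\rho)$, the first half of \ref{item:B_b_positive}; in the positive finite case it upgrades $\lim\rho F=l$ to $\rho c\to l$, hence $p'=l^2/\rho^2+o(1/\rho^2)$ and, integrating, $p=-l^2/\rho+o(1/\rho)$, i.e. \ref{item:B_a_positive} with $p_0=l^2$, and conversely. For the derivative clause I would use $L_l'(\rho)=q_l/\rho_l+F(\rho)-c(\rho)$, so that over all sub-sonic $q_l$ the binding requirement $\lim_{\rho\to0}L_l'>0$ reduces, at the extremal slope $q_l/\rho_l\to-c(\rho_l)$, to $\lim_{\rho\to0}\big(\int_\rho^{\rho_l}\tfrac{c(s)}{s}\dif s-c(\rho)\big)-c(\rho_l)\ge0$, the second half of \ref{item:B_b_positive}. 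I expect the main obstacle to be exactly this step: passing the precise leading constant (not merely the vanishing) through Lemma~\ref{sec:condition-lemma}, which genuinely needs the monotonicity furnished by concavity, together with tracking how the strict sub-sonic bound degrades into the non-strict inequalities in \ref{item:B_negative} and \ref{item:B_b_positive}.
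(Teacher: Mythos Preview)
Your proposal is correct and follows essentially the same route as the paper: the same branch-by-branch concavity computation with the identity for $2ff''-(f')^2$, the same supremum argument on the shock branch for \ref{item:negative}$\Leftrightarrow$\ref{item:B_negative} with the extremal sub-sonic slope $q_l/\rho_l\to c(\rho_l)$, and the same use of Lemma~\ref{sec:condition-lemma} (enabled by concavity) to shuttle the $\rho\to0$ behaviour between $\rho F$, $\rho c$, $\rho^2p'$ and $\rho p$. You also correctly flag the one genuinely delicate point, namely that Lemma~\ref{sec:condition-lemma} is stated only for the vanishing case, so extracting the precise leading coefficient in the case $\lim\rho F=l>0$ requires an extra monotonicity step (applying the lemma to $c(\rho)/\rho-m/\rho^2$ with $m=\lim_{\rho\to0}\rho c$, which is nonnegative and has nondecreasing $\rho^2$-multiple thanks to \ref{item:B_concave}); the paper glosses over this in the same way you do.
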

With Proposition \ref{prop:proposition_b} we have a list of conditions \ref{item:B_concave},~\ref{item:B_negative},~\ref{item:B_positive} which is equivalent to conditions \ref{item:concave},~\ref{item:negative},~\ref{item:positive} from Proposition \ref{prop:Lax_curves} but now only involves the pressure function.
This is fortunate as we want to prove Proposition \ref{prop:general_gamma}, which only contains pressure functions.
In principle, we could just plug the pressure functions into our conditions \ref{prop:proposition_b} and check, what generalized $\gamma$-laws are allowed.
But there is more insight to be gained by simplifying the conditions further.
We will do so in Propositions \ref{prop:B1_simpler},~\ref{prop:B3_simpler} and \ref{prop:B2_simpler}.
Yet this comes at a price, the next list of conditions in Proposition \ref{prop:sufficient} below will only be sufficient conditions for Propositions \ref{prop:Lax_curves} and \ref{prop:proposition_b} to hold.
\begin{prop}
  \label{prop:B1_simpler}
  Let $p \in C^3(\mathbb{R}^+)$.  Condition \ref{item:B_concave} holds if
    \begin{equation*}
      \begin{aligned}
        2p^\prime(\rho)+ \rho p^{\prime\prime}(\rho)&\geq 0\\
        6p^\prime(\rho) + 6\rho p^{\prime\prime}(\rho)+\rho^2p^{\prime\prime\prime}(\rho)&\geq 0
      \end{aligned}
    \end{equation*}
    holds for all $\rho > 0$.
    \begin{proof}
      We note that $\left( 2f f^{\prime\prime}-(f^\prime)^2  \right)(\rho_l,\rho_l) = 0$ and that
      \begin{equation*}
        \left( 2f f^{\prime\prime}-(f^\prime)^2  \right)^\prime = 2f f^{\prime\prime\prime}\ .
      \end{equation*}
      As $f\geq 0$, we see that $ 2f f^{\prime\prime}-(f^\prime)^2 \geq 0$ for all $\rho\geq\rho_l$ if $f^{\prime\prime\prime}(\rho,\rho_l)\geq 0$ for all $\rho\geq\rho_l$.  This was also proved in \cite{MartinHertyMueller2017}.  This is the case if and only if $\rho_l f^{\prime\prime\prime}(\rho,\rho_l)\geq 0$.  For this we find
    \begin{equation*}
      \begin{aligned}
       \rho_l f^{\prime\prime\prime}(\rho,\rho_l) &= 6p^\prime(\rho) + 3(2\rho-\rho_l)p^{\prime\prime}(\rho)+ \rho(\rho-\rho_l)p^{\prime\prime\prime}(\rho)\\
        &= \left[ 6p^\prime(\rho) + 6\rho p^{\prime\prime}(\rho)+\rho^2p^{\prime\prime\prime}(\rho)\right] -\rho_l \left[ 3p^{\prime\prime}(\rho)+ \rho p^{\prime\prime\prime}(\rho) \right], 
      \end{aligned}
    \end{equation*}
    which is an affine function in $\rho_l$.  Hence for given $\rho>0$ this function takes its minimum in one of the edges of the simplex $\{\rho_l \ | \ 0\leq \rho_l\leq \rho\}$.  The values on these are given by
      \begin{align*}
        &6p^\prime(\rho) + 6\rho p^{\prime\prime}(\rho)+\rho^2p^{\prime\prime\prime}(\rho)\text{ and }\\
        &3\left( 2p^\prime(\rho)+ \rho p^{\prime\prime}(\rho) \right)\ .\qedhere
      \end{align*}
        \end{proof}
      \end{prop}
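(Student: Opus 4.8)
The plan is to treat the two inequalities making up condition \ref{item:B_concave} separately. Inequality \eqref{eq:46} is literally the first hypothesis, so there is nothing to prove there, and all the work goes into deriving \eqref{eq:47}. As recorded in the proof of Proposition \ref{prop:proposition_b}, the left-hand side of \eqref{eq:47} equals $2ff^{\prime\prime}-(f^\prime)^2$ (derivatives in $\rho$, with $f=f(\rho,\rho_l)$ as in \eqref{eq:43}), so it suffices to show that $2ff^{\prime\prime}-(f^\prime)^2 \geq 0$ for all $\rho \geq \rho_l > 0$.

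I would first record two elementary facts. At $\rho=\rho_l$ the quantity $2ff^{\prime\prime}-(f^\prime)^2$ vanishes, because $f(\rho_l,\rho_l)=0$ and $f^\prime(\rho_l,\rho_l)=0$ (both $a(\cdot,\rho_l)$ and $\Delta p(\cdot,\rho_l)$ vanish at $\rho_l$). And its $\rho$-derivative telescopes: $(2ff^{\prime\prime}-(f^\prime)^2)^\prime = 2f^\prime f^{\prime\prime}+2ff^{\prime\prime\prime}-2f^\prime f^{\prime\prime} = 2ff^{\prime\prime\prime}$. Since $a(\rho,\rho_l)\geq 0$ and $\Delta p(\rho,\rho_l)\geq 0$ for $\rho \geq \rho_l$ (using $p^\prime>0$), we have $f\geq 0$ on this range; hence $2ff^{\prime\prime}-(f^\prime)^2$ is nondecreasing in $\rho$ whenever $f^{\prime\prime\prime}\geq 0$, and since it starts from the value $0$ at $\rho=\rho_l$ this already forces $2ff^{\prime\prime}-(f^\prime)^2\geq 0$. (This reduction also appears in \cite{MartinHertyMueller2017}.) So the task collapses to showing $f^{\prime\prime\prime}(\rho,\rho_l)\geq 0$ for all $\rho\geq\rho_l>0$.

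Finally I would compute $f^{\prime\prime\prime}$ explicitly. Writing $\rho_l f = (\rho^2-\rho\rho_l)(p(\rho)-p(\rho_l))$ and differentiating three times in $\rho$ yields
\begin{equation*}
  \rho_l\, f^{\prime\prime\prime}(\rho,\rho_l) = \bigl[6p^\prime(\rho)+6\rho p^{\prime\prime}(\rho)+\rho^2 p^{\prime\prime\prime}(\rho)\bigr] - \rho_l\bigl[3p^{\prime\prime}(\rho)+\rho p^{\prime\prime\prime}(\rho)\bigr].
\end{equation*}
The crucial structural point is that, for fixed $\rho$, the right-hand side is affine in $\rho_l$, so on $\rho_l\in[0,\rho]$ its minimum is attained at an endpoint: at $\rho_l=0$ it equals the second hypothesis $6p^\prime+6\rho p^{\prime\prime}+\rho^2 p^{\prime\prime\prime}$, and at $\rho_l=\rho$ it equals $3\,(2p^\prime+\rho p^{\prime\prime})$, i.e.\ three times the first hypothesis. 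Both are nonnegative by assumption, hence $\rho_l f^{\prime\prime\prime}\geq 0$ and therefore $f^{\prime\prime\prime}\geq 0$, which completes the argument. The only delicate part is the bookkeeping of the three differentiations and verifying the affine-in-$\rho_l$ form; once that identity is in hand the endpoint evaluation and everything else is routine.
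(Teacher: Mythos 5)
Your proposal is correct and follows essentially the same route as the paper's proof: reduce \eqref{eq:47} to $2ff^{\prime\prime}-(f^\prime)^2\geq 0$, observe that this quantity vanishes at $\rho=\rho_l$ and has derivative $2ff^{\prime\prime\prime}$ with $f\geq 0$, then show $\rho_l f^{\prime\prime\prime}\geq 0$ by the same explicit computation and the same affine-in-$\rho_l$ endpoint argument on $[0,\rho]$. The only difference is expository (you spell out why $f$ and $f^\prime$ vanish at $\rho_l$ and why the first hypothesis handles \eqref{eq:46}), which the paper leaves implicit.
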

The next condition we will replace is condition \ref{item:B_b_positive}. 

\begin{prop}
  \label{prop:B3_simpler}
  Condition \ref{item:B_b_positive} is fulfilled if either of these conditions is satisfied:
  \begin{enumerate}
  \item[\normalfont{C3(b)}] \begin{enumerate}[label=(\roman*)]
    \item $\lim_{\rho \to 0} c(\rho) = 0$ and $2p^\prime(\rho)-\rho p^{\prime\prime}(\rho) \geq 0$ for all $\rho>0$.
    \item $0<\lim_{\rho \to 0} c(\rho) < \infty$.
    \item There is a $\eta \in (0,1)$ such that $\lim_{\rho \to 0}\rho^\eta c(\rho)$ exists and $0 <\lim_{\rho \to 0}\rho^\eta c(\rho) <\infty$.
    \end{enumerate}
\end{enumerate}
  \begin{proof}\ 
    \begin{enumerate}[label=(\roman*)]
    \item Here we have
      \begin{equation*}
        \begin{aligned}
          \lim_{\rho \to 0}\int_{\rho}^{\rho_l}  \frac{c(s)}{s}\dif s - c(\rho) - c(\rho_l) & = \lim_{\rho \to 0}\int_{\rho}^{\rho_l}  \left( \frac{c(s)}{s} - c^\prime(s)\right)\dif s - 2c(\rho)\\
          & = \lim_{\rho \to 0}\int_{\rho}^{\rho_l}  \left( \frac{c(s)}{s} - c^\prime(s)\right)\dif s \\
          &= \lim_{\rho \to 0}\int^{\rho_l}_\rho  \frac{1}{2s\sqrt{p^\prime(s)}}(2p^\prime(s)-sp^{\prime\prime}(s))\dif s\\
          &\geq 0.
      \end{aligned}
    \end{equation*}
  \item In this case the integral is unbounded near zero, but  $c(\rho)+c(\rho_l)$ is finite.
  \item In this case there is $a >0$ such that $\rho^\eta c(\rho) = a\left( 1+r(\rho) \right)$ with $r(\rho) \to 0$.  Then, for every $m \in \mathbb{N}$ there is $\rho_m>0$ such that for all $\rho< \rho_m$ there holds $c(\rho) \geq a\left( 1-\frac{1}{m} \right)\rho^{-\eta}$.  Choose now $m$ so great that
    \begin{equation*}
      \left(1-\frac{1}{m}\right) \frac{1}{\eta}>1
    \end{equation*}
    and then $\rho_{m,0}$ so small that for all $\rho<\rho_{m,0}$ there holds
    \begin{equation*}
      \left(1-\frac{1}{m}\right) \frac{1}{\eta}-r(\rho) = 1+ \theta
    \end{equation*}
    with $\theta>0$.  Define also
    \begin{equation*}
      C_m = \int_{\rho_m}^{\rho_l}  \frac{c(s)}{s}\dif s.
    \end{equation*}
    Then we find for $\rho<\rho_{m,0}$
      \begin{align*}
        \int_{\rho}^{\rho_l}  &\frac{c(s)}{s}\dif s - c(\rho) - c(\rho_l) = \int_{\rho}^{\rho_m}  \frac{c(s)}{s}\dif s + \int_{\rho_m}^{\rho_l}  \frac{c(s)}{s}\dif s - c(\rho) - c(\rho_l)\\
        & \geq a \left(1-\frac{1}{m}\right) \int_\rho^{\rho_m}s^{-\eta-1}\dif s + C_m -c(\rho) -c(\rho_l)\\
        & = a \left( \left(1-\frac{1}{m}\right) \frac{1}{\eta}-1 -r(\rho)\right)\rho^{-\eta}  -a \left(1-\frac{1}{m}\right) \frac{1}{\eta}\rho_m^{-\eta} + C_m  -c(\rho_l)\\
        &> a \theta\rho^{-\eta}  -a \left(1-\frac{1}{m}\right) \frac{1}{\eta}\rho_m^{-\eta} + C_m  -c(\rho_l)\\
        & \overset{\rho \to 0}{\to} +\infty>0. \qedhere
    \end{align*}
      \end{enumerate}
  \end{proof}
\end{prop}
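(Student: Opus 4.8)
The plan is to verify, in each of the three cases (i)--(iii) separately, the two ingredients making up condition \ref{item:B_b_positive}: that $p(\rho)\in\underset{\rho\to 0}{o}(1/\rho)$, and that $\lim_{\rho\to 0}\bigl(\int_\rho^{\rho_l}\tfrac{c(s)}{s}\dif s-c(\rho)-c(\rho_l)\bigr)\ge 0$ for every $\rho_l>0$. The first ingredient is routine: each of the three hypotheses bounds the growth of $c=\sqrt{p'}$, hence of $p'$, near $0$ — in cases (i) and (ii) $p'$ is bounded near $0$, and in case (iii) $p'(\rho)=O(\rho^{-2\eta})$ with $2\eta<2$ — so integrating $p'$ from a fixed point shows $\rho p(\rho)\to 0$. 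The substance is the limit inequality, and here it is convenient to first use $c(\rho_l)-c(\rho)=\int_\rho^{\rho_l}c'(s)\dif s$ to rewrite the quantity as $\int_\rho^{\rho_l}\bigl(\tfrac{c(s)}{s}-c'(s)\bigr)\dif s-2c(\rho)$, noting that $\tfrac{c(s)}{s}-c'(s)=\tfrac{2p'(s)-sp''(s)}{2s\sqrt{p'(s)}}$ because $c=\sqrt{p'}$.

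Cases (i) and (ii) then fall out quickly. In case (i), $c(\rho)\to 0$ removes the $-2c(\rho)$ term, while the remaining integrand is nonnegative by the hypothesis $2p'-\rho p''\ge 0$; hence the integral is nondecreasing as $\rho\to 0$, so its limit exists in $[0,\infty]$ and is in particular $\ge 0$. In case (ii), $c(\rho)\to\ell$ with $0<\ell<\infty$ forces $c(s)/s\ge \ell/(2s)$ for small $s$, so $\int_\rho^{\rho_l}\tfrac{c(s)}{s}\dif s\to+\infty$ as $\rho\to 0$ while $c(\rho)+c(\rho_l)$ stays bounded; the limit is therefore $+\infty$.

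Case (iii) is the one that requires work, and I expect the careful asymptotic bookkeeping there to be the main obstacle. Morally, $c(\rho)\sim a\rho^{-\eta}$ gives $\int_\rho^{\rho_l}\tfrac{c(s)}{s}\dif s\sim\tfrac{a}{\eta}\rho^{-\eta}$, which beats $c(\rho)\sim a\rho^{-\eta}$ precisely because $\tfrac{1}{\eta}>1$, i.e. because $\eta<1$; so the difference blows up to $+\infty$. To make this rigorous one must control the error in $\rho^\eta c(\rho)=a(1+r(\rho))$, $r(\rho)\to 0$, against the gap $\tfrac{1}{\eta}-1$: choose $m\in\mathbb{N}$ with $(1-\tfrac{1}{m})\tfrac{1}{\eta}>1$, use the resulting lower bound $c(\rho)\ge a(1-\tfrac{1}{m})\rho^{-\eta}$ valid on a small interval $(0,\rho_m)$ to estimate $\int_\rho^{\rho_m}\tfrac{c(s)}{s}\dif s\ge a(1-\tfrac{1}{m})\int_\rho^{\rho_m}s^{-\eta-1}\dif s$, and split $\int_\rho^{\rho_l}=\int_\rho^{\rho_m}+\int_{\rho_m}^{\rho_l}$ so that, after collecting terms, the coefficient of $\rho^{-\eta}$ equals $a\bigl((1-\tfrac{1}{m})\tfrac{1}{\eta}-1-r(\rho)\bigr)$, which for $\rho$ small is bounded below by a positive constant while the remaining terms are independent of $\rho$; hence the whole expression tends to $+\infty\ge 0$. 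The delicate point is exactly this: picking $m$ so that the leading constant stays strictly above $1$ after absorbing the perturbation $r(\rho)$, and keeping the bounded remainder terms from interfering with the $\rho^{-\eta}$ blow-up.
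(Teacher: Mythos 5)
Your proposal is correct and takes essentially the same route as the paper: the same rewriting of $c(\rho_l)-c(\rho)$ as $\int_\rho^{\rho_l}c'(s)\,ds$ to get the nonnegative integrand $\frac{2p'-sp''}{2s\sqrt{p'}}$ in case (i), the same divergent-integral-versus-bounded-terms argument in case (ii), and the same choice of $m$ with $(1-\tfrac{1}{m})\tfrac{1}{\eta}>1$, lower bound $c(\rho)\geq a(1-\tfrac{1}{m})\rho^{-\eta}$, and splitting of the integral at $\rho_m$ in case (iii). Your additional explicit verification that $p\in o(1/\rho)$ near $0$ in each case is a correct supplement to a point the paper leaves implicit, but it does not change the argument.
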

\begin{prop}
  \label{prop:B2_simpler}
  All conditions only depend on differences in pressures, that is, if $c \in \mathbb{R}$ is a constant then if a pressure function $p$ satisfies our conditions, so does $p+c$.  This means that if $p_\infty<\infty$ in condition \ref{item:B_negative}, we can instead choose $p_\infty = 0$, yielding the clearer condition
  \begin{equation*}
    p^\prime(\rho)\leq -\frac{p(\rho)}{\rho}\ .
  \end{equation*}
\end{prop}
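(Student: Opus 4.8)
The plan is to verify, one condition at a time, that replacing $p$ by $p+c$ for a constant $c\in\mathbb{R}$ leaves every condition introduced so far unchanged, and then to read off what condition \ref{item:B_negative} becomes under the normalization $p_\infty=0$. The cleanest route is through the Lax curves themselves: in the rarefaction branches of \eqref{eq:L_l}, \eqref{eq:L_r} the pressure enters only via $c(s)=\sqrt{p'(s)}$, and in the shock branches only via $f(\rho,\rho_l)=a(\rho,\rho_l)\,\Delta p(\rho,\rho_l)$ with $\Delta p(\rho,\rho_l)=p(\rho)-p(\rho_l)$; neither $p'$ nor $\Delta p$ is affected by adding a constant. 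Hence $L_l$, $L_r$ and all their derivatives \eqref{eq:derivative_L_l}, \eqref{eq:derivative_L_r} are the same for $p$ and for $p+c$, so conditions \ref{item:concave}, \ref{item:negative}, \ref{item:positive}, being statements purely about $L_l$ (and, via \eqref{eq:9}, $-L_r$), are invariant. By the equivalence of Proposition \ref{prop:proposition_b} the same holds for \ref{item:B_concave}, \ref{item:B_negative}, \ref{item:B_positive}; alternatively one checks directly that \eqref{eq:46}, \eqref{eq:47} and the sufficient conditions of Propositions \ref{prop:B1_simpler} and \ref{prop:B3_simpler} involve $p$ only through $p'$, $p''$, $p'''$ and $\Delta p$, that in \ref{item:B_negative} the quantity $p_\infty-\rho p'(\rho)-p(\rho)$ can be rewritten as $\bigl(p_\infty-p(\rho)\bigr)-\rho p'(\rho)$ with both $p_\infty$ and $p(\rho)$ increased by $c$, and that in \ref{item:B_a_positive}, \ref{item:B_b_positive} an additive constant is absorbed into $\underset{\rho\to 0}{o}\!\left(\frac{1}{\rho}\right)$ while the remaining parts only see $c(\cdot)$.

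With this invariance in hand the rest is immediate. Assume $p$ satisfies the conditions and $p_\infty=\lim_{\rho\to\infty}p(\rho)<\infty$; the case $p_\infty=\infty$ is not touched by the proposition. Applying the invariance with $c=-p_\infty$ gives a pressure law $\tilde p:=p-p_\infty$ that satisfies exactly the same conditions and has $\tilde p_\infty=0$. For $\tilde p$, condition \ref{item:B_negative} reads $0-\rho\,\tilde p'(\rho)-\tilde p(\rho)\geq 0$ for all $\rho>0$, that is $\tilde p'(\rho)\leq -\frac{\tilde p(\rho)}{\rho}$; dropping the tilde yields the asserted inequality $p'(\rho)\leq -\frac{p(\rho)}{\rho}$.

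I do not anticipate a real obstacle. The two points deserving a word of care are: that a bounded constant really is $\underset{\rho\to 0}{o}\!\left(\frac{1}{\rho}\right)$, so the $\rho\to 0$ asymptotics in \ref{item:B_positive} are genuinely unaffected by the shift; and that the shifted $\tilde p$ is negative everywhere (since $p$ is strictly increasing and bounded above by $p_\infty$), which causes no trouble because Proposition \ref{prop:proposition_b} only requires $p\in C^2(\mathbb{R}^+)$ with $p'>0$, not positivity of $p$ itself.
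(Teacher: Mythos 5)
Your proposal is correct and follows exactly the reasoning the paper leaves implicit (it states Proposition \ref{prop:B2_simpler} without a formal proof): every condition involves $p$ only through $p^\prime$, $p^{\prime\prime}$, $p^{\prime\prime\prime}$, $\Delta p$ and $p_\infty - p(\rho)$, all invariant under $p \mapsto p+c$, after which normalizing with $c=-p_\infty$ turns condition \ref{item:B_negative} into $p^\prime(\rho)\leq -p(\rho)/\rho$. Your two points of care — that a constant is $\underset{\rho\to 0}{o}\left(\frac{1}{\rho}\right)$ and that Proposition \ref{prop:proposition_b} never requires $p>0$ — are precisely the right details to check, so nothing is missing.
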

Having all we need, we now sum up our findings in the following Proposition~\ref{prop:sufficient}:
\stepcounter{thm}
\begin{conprop}[Proposition \thethm]
  \label{prop:sufficient}
  The Riemann problem for arbitrary sub-sonic left- and right-hand states is well-posed if the pressure function $p$ satisfies the following conditions:
\begin{enumerate}[label=\normalfont{(\theconcounter\arabic*)},ref=\normalfont{\theconcounter}\arabic*]
  \item \label{item:C_concave} both inequalities of Proposition~\ref{prop:B1_simpler}:
    \begin{align}
      2p^\prime(\rho)+ \rho p^{\prime\prime}(\rho)&\geq 0,\\
      6p^\prime(\rho) + 6\rho p^{\prime\prime}(\rho)+\rho^2p^{\prime\prime\prime}(\rho)&\geq 0,
    \end{align}
    \item \label{item:C_negative} one of the conditions according to Proposition~\ref{prop:B2_simpler}:
\begin{enumerate}[label=(\alph*), ref=\normalfont{\theconcounter}\ensuremath{\arabic{enumi}}(\alph{enumii})]
      \item $p \to \infty$ for $\rho \to \infty$ or
      \item $p^\prime(\rho)\leq -\frac{p(\rho)}{\rho}$ for all $\rho>0$,
      \end{enumerate}
    \item \label{item:C_positive} according to Proposition~\ref{prop:B3_simpler}:
    \begin{enumerate}[label=(\alph*), ref=C\arabic{enumi}(\alph{enumii})]
    \item There is $p_0 >0$ such that $p(\rho) = -\frac{p_0}{\rho} + o\left( \frac{1}{\rho} \right)$ for $\rho \to 0$ or
    \item one of the following conditions is fulfilled: \label{item:C_b_positive}
      \begin{enumerate}[label=(\roman*)]
      \item $\lim_{\rho \to 0} c(\rho) = 0$ and $2p^\prime(\rho)-\rho p^{\prime\prime}(\rho) \geq 0$ for all $\rho>0$,
      \item $0<\lim_{\rho \to 0} c(\rho) < \infty$,
      \item there is a $\eta \in (0,1)$ such that $\lim_{\rho \to 0}\rho^\eta c(\rho)$ exists and $0 <\lim_{\rho \to 0}\rho^\eta c(\rho) <\infty$.
      \end{enumerate}
    \end{enumerate}
    \end{enumerate}
    These conditions are positively linear in $p$ and hence all pressure functions satisfying them define a convex cone.  In addition, because the integral is monotone and linear, we can also integrate over pressure functions to find new pressure functions. 
  \end{conprop}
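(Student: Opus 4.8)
The plan is to obtain Proposition~\ref{prop:sufficient} purely by chaining the results already proved, so that no fresh analysis of the Lax curves is required. By \eqref{eq:10} a solution of the Riemann problem with sub-sonic data $U_l=(\rho_l,q_l)$, $U_r=(\rho_r,q_r)$ is a positive zero of $L_l-L_r$, and by \eqref{eq:9} one may write $L_l-L_r = L_l(\,\cdot\,;\rho_l,q_l)+L_l(\,\cdot\,;\rho_r,-q_r)$, a sum of two Lax curves of the first type evaluated at sub-sonic states (sub-sonicity being invariant under $q\mapsto-q$). Since, by the remark following Proposition~\ref{prop:Lax_curves}, the prerequisites \ref{item:concave}--\ref{item:positive} of that proposition are stable under addition, it suffices to show that $L_l(\,\cdot\,;\tilde\rho,\tilde q)$ satisfies \ref{item:concave}--\ref{item:positive} for \emph{every} sub-sonic $(\tilde\rho,\tilde q)$; then $L_l-L_r$ does too and Proposition~\ref{prop:Lax_curves} yields the desired unique positive zero, i.e.\ a well-posed Riemann problem.

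The remaining step is to pass from the Lax curve to the pressure. By Proposition~\ref{prop:proposition_b}, $L_l$ satisfies \ref{item:concave}--\ref{item:positive} for all sub-sonic $(\tilde\rho,\tilde q)$ if and only if $p$ satisfies \ref{item:B_concave}, \ref{item:B_negative}, \ref{item:B_positive}, so it is enough to verify that \ref{item:C_concave}, \ref{item:C_negative}, \ref{item:C_positive} imply \ref{item:B_concave}, \ref{item:B_negative}, \ref{item:B_positive}. This is exactly the content of Propositions~\ref{prop:B1_simpler}, \ref{prop:B3_simpler} and \ref{prop:B2_simpler}: \ref{item:C_concave} is the hypothesis of Proposition~\ref{prop:B1_simpler} and hence gives \ref{item:B_concave}; in \ref{item:C_negative}, alternative (a) forces $p_\infty=\infty$, which makes \ref{item:B_negative} trivially true since $\rho p'(\rho)+p(\rho)$ is finite for each fixed $\rho$, while alternative (b), after the harmless shift $p\mapsto p-p_\infty$ allowed by Proposition~\ref{prop:B2_simpler} (note $p$ is then bounded above, since $p'>0$ together with $p'\le -p/\rho$ gives $(\rho p)'=p+\rho p'\le0$, and is increasing, hence $p_\infty<\infty$), is precisely \ref{item:B_negative}; finally \ref{item:C_positive}(a) is \ref{item:B_a_positive} verbatim, and \ref{item:C_b_positive} implies \ref{item:B_b_positive} by Proposition~\ref{prop:B3_simpler}. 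I would check in passing that branch (a) of \ref{item:C_positive} gives $p\sim-p_0/\rho\notin\underset{\rho\to0}{o}(1/\rho)$ whereas each case of \ref{item:C_b_positive} forces $\rho p(\rho)\to0$, so that $p$ always lands in exactly one of \ref{item:B_a_positive}, \ref{item:B_b_positive} and the exclusivity clause of Proposition~\ref{prop:proposition_b} holds automatically. Reading the implications back up through Proposition~\ref{prop:proposition_b}, the remark, and Proposition~\ref{prop:Lax_curves} would then complete the proof of well-posedness, under the standing hypotheses $p\in C^3(\R^+)$ and $p'>0$ (the latter being the strict hyperbolicity already assumed, and implicit in the appearance of $p'''$ in the conditions).

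For the structural addenda: the two inequalities in \ref{item:C_concave}, the inequality in case (i) of \ref{item:C_b_positive}, and the inequality $p'\le-p/\rho$ in alternative (b) of \ref{item:C_negative} are all homogeneous and linear in $p,p',p'',p'''$, hence stable under sums and multiplication by non-negative scalars. For the remaining ``or''-alternatives in \ref{item:C_negative} and \ref{item:C_positive} I would argue by dominant asymptotics of a positive combination $\sum_i\alpha_i p_i$: as $\rho\to\infty$, if some $p_i$ with $\alpha_i>0$ is of type \ref{item:C_negative}(a) then, every type-(b) summand being bounded above, the sum tends to $\infty$; as $\rho\to0$, from $p'=\sum_i\alpha_i c_i^2$ the slowest-decaying summand dictates the branch — a \ref{item:C_positive}(a) summand makes $p\sim-(\sum_i\alpha_ip_{0,i})/\rho$, a type-(b)(iii) summand with exponent $\eta$ makes $\rho^{\eta}c$ converge to a positive limit with the same $\eta$, and otherwise one falls into (b)(i) or (b)(ii) — so the admissible pressure functions form a convex cone. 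The integration statement then follows by writing $p(\rho)=\int p_\mu(\rho)\dif\mu$ as a limit of non-negative Riemann sums and noting that the relevant inequalities and one-sided limits survive such limits by monotonicity and linearity of the integral, provided the $\rho\to0$ and $\rho\to\infty$ asymptotics are uniform in $\mu$. I expect the only real care to be needed not in the well-posedness deduction, which is bookkeeping once Propositions~\ref{prop:Lax_curves}, \ref{prop:proposition_b}, \ref{prop:B1_simpler}, \ref{prop:B3_simpler} and \ref{prop:B2_simpler} are at hand, but in handling these ``or''-alternatives cleanly under combination and in stating the precise uniformity that makes the integration claim fully rigorous.
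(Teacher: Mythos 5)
Your proposal takes essentially the same route as the paper: Proposition \ref{prop:sufficient} is presented there as a summary whose (implicit) proof is exactly your chain --- equation \eqref{eq:9} together with the additivity remark after Proposition \ref{prop:Lax_curves}, then Proposition \ref{prop:proposition_b} to translate to pressure conditions, then Propositions \ref{prop:B1_simpler}, \ref{prop:B3_simpler} and \ref{prop:B2_simpler} to pass from the C-conditions to the B-conditions, with condition \ref{item:C_negative}(b) read, as Proposition \ref{prop:B2_simpler} intends, as the $p_\infty$-normalized form of \ref{item:B_negative}. Your extra care with the ``or''-alternatives in the convex-cone claim and with uniformity in the integration claim goes beyond what the paper records, but is consistent with its assertions and in the same spirit.
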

  With Proposition \ref{prop:sufficient} we have a useful list of conditions which can be checked easily for any given candidate for a pressure function.
  The proposition now makes it easy to prove our central result Proposition \ref{prop:general_gamma} and we will do so now.
    \begin{proof}[Proof of Proposition \ref{prop:general_gamma}]\ 
      \begin{itemize}
      \item For the first part of the claim we note that for $\abs{\delta}\leq 2$ and $\alpha>0$ the generalized $\gamma$-law fulfills the conditions in \ref{prop:sufficient}.  Note that at $\delta = 0$ the conditions in \ref{item:C_positive} switch and at $\delta =-1$ the conditions in \ref{item:C_negative} switch.
      \item For the second part of the claim we observe:
        For $\delta>2$, choose $-c(\rho_l)<\frac{q_l}{\rho_l}<-\frac{2}{\delta}c(\rho_l)$, which is obviously sub-sonic.  Then $L_l$ is strictly negative for $\rho>0$, as is easily computed.  A similar range for $\frac{q_r}{\rho_r}$ shows the same for $-L_r$.  For $\delta<-2$, choose $\sqrt{\frac{-1}{\delta+1}}c(\rho_l)< \frac{q_l}{\rho_l}< c(\rho_l)$ and $L_l$ is strictly positive for $\rho>0$ and similarly for $-L_r$.\qedhere
  \end{itemize}
  \end{proof}

\begin{example}
  As just proved, the functions
  \begin{equation*}
    \begin{aligned}
    p(\rho)&=\rho^3\\
    p(\rho)&=\rho\\
    p(\rho)&=-\rho^{-1}\\
    p(\rho)&=-\rho^{-\frac{1}{2}}\\    
    p(\rho)&=\ln(\rho)\\
\end{aligned}
\end{equation*}
are all valid pressure functions. Even a and more exotic
\begin{equation*}
  p(\rho) = \frac{\rho^3-\rho}{\ln(\rho)} = \int_1^3\rho^\gamma \dif \gamma
\end{equation*}
might be possible.
\end{example}

\subsection{Extension to junctions}
\label{sec:extension-junctions}
Our findings guarantee well-posedness of Riemann problems with sub-sonic initial conditions under certain conditions on the pressure function.
We now examine the coupling conditions \eqref{eq:pressure_coupling} and \eqref{eq:flux_conservation} and do so by considering the generalized Riemann problem at the junction in accordance with \cite{coupling_conditions_isothermal,banda_herty_klar,doi:10.1137/060665841}.
Consider a junction with incoming pipes indexed by $e \in {I}$ and outgoing pipes indexed by $f \in {O}$. 
At the junction-facing end of each pipe there are initial states $U_i = (\rho_i,q_i)$ for each $i \in I$ and $i \in O$, respectively.
To make things tractable, we restrict the solution of the junction Riemann problem to be of this form: In each pipe $i \in I\cup O$, there appears exactly one new state $V_i$ next to the junction such that the $V_i$ satisfy the junction conditions and are connected to their respective $U_i$ by a shock or a rarefaction wave. On ingoing pipes these must be $1$-waves, on outgoing pipes these must be $2$-waves. A sketch of this is shown in figure \ref{fig:junction}.

\begin{figure}[h]
  \centering
  \begin{subfigure}{0.49\textwidth}
    \centering
    \includegraphics[width=0.49\textwidth]{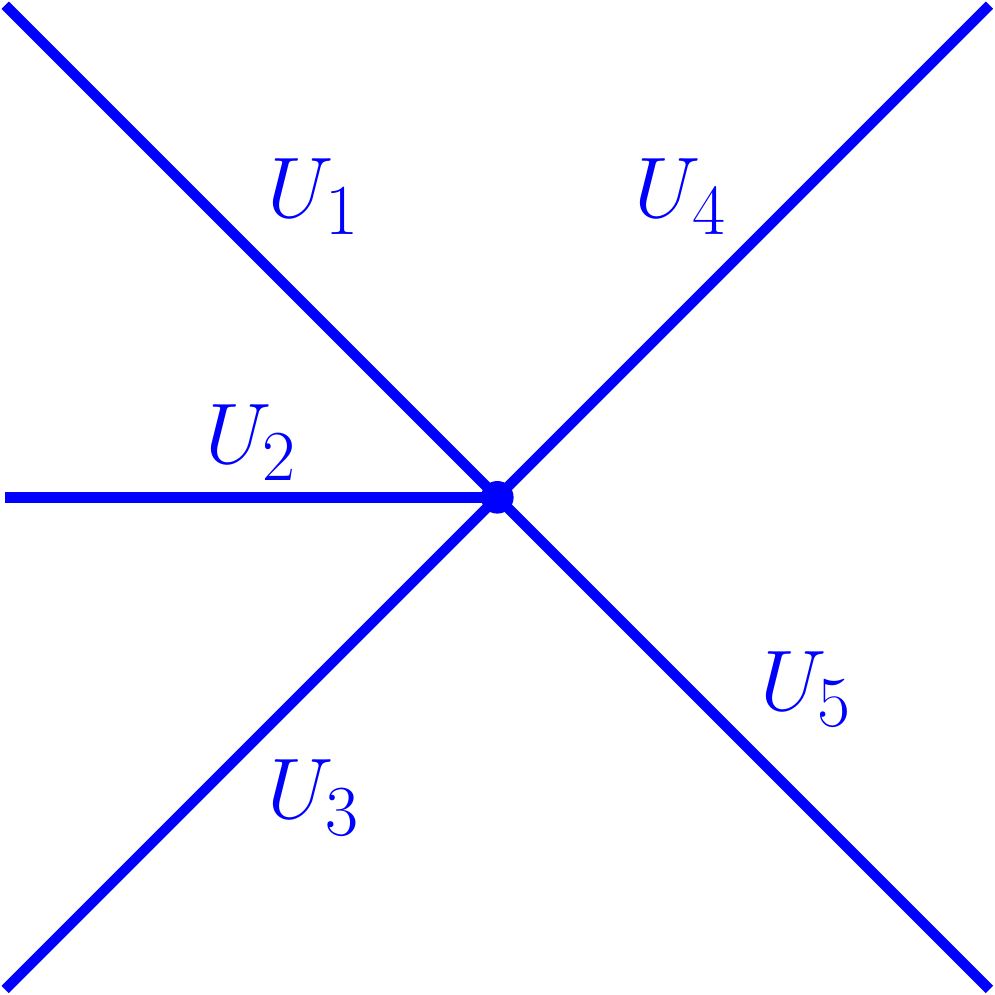}
    \caption{Junction at $t = 0$. }
  \end{subfigure}
  \begin{subfigure}{0.49\textwidth}
    \centering
    \includegraphics[width=0.49\textwidth]{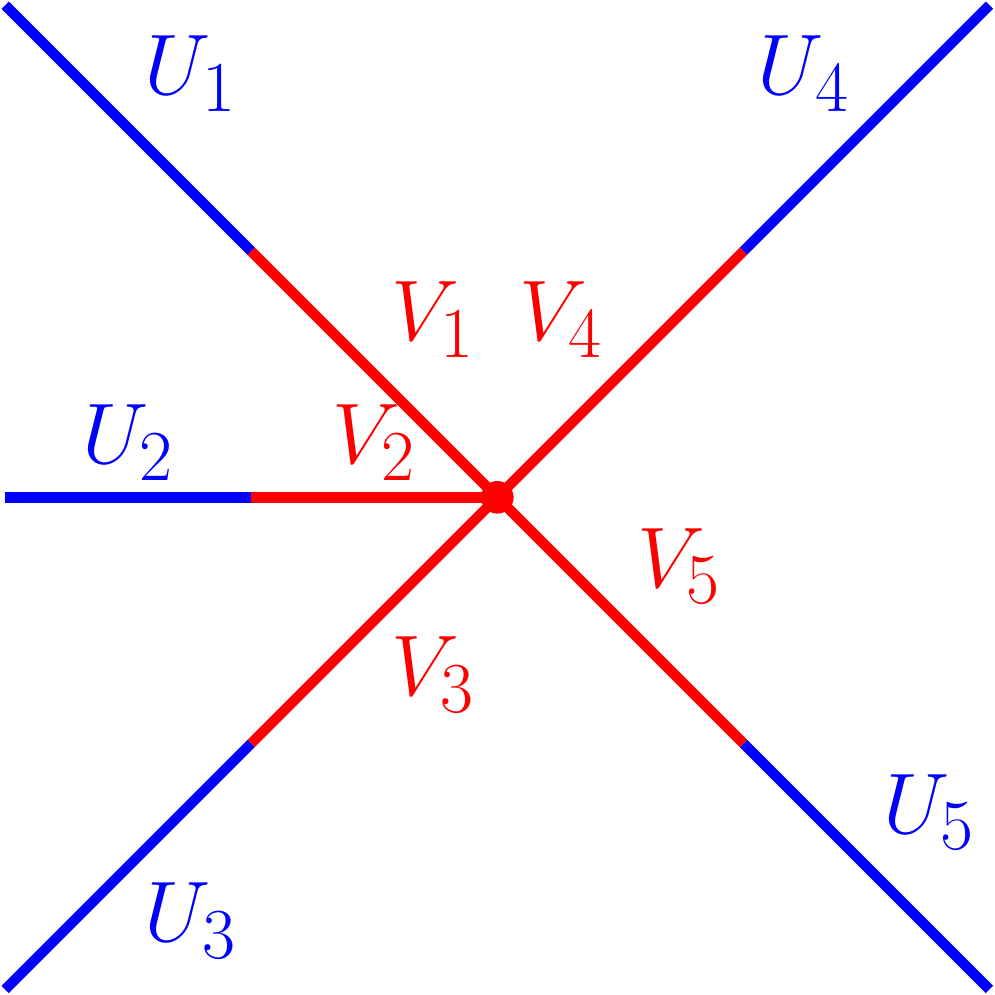}
    \caption{Junction after some time. }
  \end{subfigure}
  \caption{A junction with initial states. $i = 1,2,3$ are incoming pipes, $i=4,5$ are outgoing.}
  \label{fig:junction}
\end{figure}

This is not a great restriction as this is the only solution structure found in pipelines with low Mach number, yet for different solutions, see \cite{MartinHertyMueller2017}.
For these new states to appear at all, the wave speed between $V_i$ and $U_i$ must be negative on ingoing pipes and positive on outgoing ones.
To keep things simple, we examine a single ingoing pipe $i$ with initial condition $U_i$ and new state $V_i= (\rho,L_l(\rho;U_i))$.
\begin{prop}
  Let Proposition \ref{prop:proposition_b} and hence \ref{prop:Lax_curves} be fulfilled for the pressure.
  Let $\rho_{i,\text{min}}$ be such that $L_l^\prime(\rho_{i,\text{min}};U_i) = 0$, if it exists, otherwise let $\rho_{i,\text{min}}=0$.
  The wave speed of the wave between $U_i$ and $V_i$ is negative if and only if $\rho>\rho_{i,\text{min}}$.
  \begin{proof}
    One easily sees that $\lambda_1(\rho,L_l(\rho;U_i)) = L_l^\prime(\rho;U_i)$.
    Also using the definitions \eqref{eq:L_l} and \eqref{eq:derivative_L_l} and the concavity of $L_l$, one can see that $\rho_{i,\text{min}}$ is always unique.
    Because $L_l$ is concave, $L_l^\prime$ is decreasing, so $\lambda_1(\rho,L_l(\rho;U_i))<0$ if and only if $\rho>\rho_{i,\text{min}}$.
    \begin{itemize}
    \item[$\Leftarrow$:] Let $\rho>\rho_{i,\text{min}}$. Then $\lambda_1(V_i)<0$.  If $V_i$ is in the rarefaction part of $L_l(\rho;U_i)$ the wave speed is negative.
      If $V_i$ is in the shock part of $L_l(\rho;U_i)$, we compute:
      \begin{equation*}
        s^\prime(\rho)= \left(\frac{L_l(\rho;U_i)-L(\rho_i;U_i)}{\rho-\rho_i}  \right)^\prime= \frac{L_l^\prime(\rho;U_i)(\rho-\rho_i)-(L_l(\rho;U_i)-L_l(\rho_i;U_i))}{(\rho-\rho_i)^2}    \leq 0\ ,
      \end{equation*}
      since $L_l$ is concave.  So
      \begin{equation*}
        s(\rho)\leq s(\rho_i) = \lambda_1(\rho_i)<0
      \end{equation*}
      because $U_i$ is sub-sonic.
    \item[$\Rightarrow$:] Let $\rho\leq\rho_{i,\text{min}}$. Because $U_i$ is sub-sonic, we have $\rho\leq\rho_{i,\text{min}}<\rho_i$ and are dealing with a rarefaction wave.
      But then the wave speed is just given by $\lambda_1(\rho,L_l(\rho;U_i)$ which is non-negative. \qedhere
    \end{itemize}
  \end{proof}
\end{prop}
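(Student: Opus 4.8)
The plan is to replace the vague notion of ``wave speed'' by the slope $L_l^\prime(\,\cdot\,;U_i)$ of the $1$-Lax curve and then let the concavity of $L_l$ guaranteed by the hypotheses (condition~\ref{item:concave} of Proposition~\ref{prop:Lax_curves}) do the rest.

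First I would record the standard identity that along the $1$-Lax curve the characteristic speed equals the slope of the curve. On the rarefaction branch $\rho\le\rho_i$, differentiating \eqref{eq:L_l} gives $L_l^\prime(\rho;U_i)=\tfrac{L_l(\rho;U_i)}{\rho}-c(\rho)=\lambda_1\big(\rho,L_l(\rho;U_i)\big)$, so the trailing (fastest) speed of the rarefaction fan joining $U_i$ to $V_i$ is exactly $L_l^\prime(\rho;U_i)$. On the shock branch $\rho\ge\rho_i$, the Rankine--Hugoniot relation for the continuity equation forces the shock speed to equal the chord slope $s(\rho)=\tfrac{q-q_i}{\rho-\rho_i}=\tfrac{L_l(\rho;U_i)-L_l(\rho_i;U_i)}{\rho-\rho_i}$, and since the Lax curve is $C^1$ at its base point one gets $s(\rho_i)=L_l^\prime(\rho_i;U_i)=\lambda_1(U_i)$.

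Next I would use concavity. Condition~\ref{item:concave} gives $L_l^{\prime\prime}\le0$, so $L_l^\prime(\,\cdot\,;U_i)$ is non-increasing; since $U_i$ is sub-sonic, $L_l^\prime(\rho_i;U_i)=\lambda_1(U_i)<0$, so $L_l^\prime<0$ on the whole shock branch $\rho\ge\rho_i$. Hence any zero of $L_l^\prime(\,\cdot\,;U_i)$ lies in $(0,\rho_i)$; a short argument from the rarefaction formula in \eqref{eq:derivative_L_l} shows it is unique, so $\rho_{i,\text{min}}$ is well defined (and $\rho_{i,\text{min}}=0$ precisely when there is no zero). In every case $L_l^\prime(\rho;U_i)<0\iff\rho>\rho_{i,\text{min}}$, and $\rho_{i,\text{min}}<\rho_i$.

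Finally I would conclude by cases. For ``$\Leftarrow$'', let $\rho>\rho_{i,\text{min}}$. If $\rho\le\rho_i$ the connecting wave is a $1$-rarefaction, whose fan occupies the speeds between $\lambda_1(U_i)<0$ and $\lambda_1(V_i)=L_l^\prime(\rho;U_i)$; the latter is $<0$ by the above and $\ge\lambda_1(U_i)$ by concavity, so the entire fan has negative speed. If $\rho\ge\rho_i$ the wave is a $1$-shock of speed $s(\rho)$; from $s^\prime(\rho)=\tfrac{L_l^\prime(\rho;U_i)(\rho-\rho_i)-(L_l(\rho;U_i)-L_l(\rho_i;U_i))}{(\rho-\rho_i)^2}\le0$ (the tangent line of a concave function lies above its graph) we get $s(\rho)\le s(\rho_i)=\lambda_1(U_i)<0$. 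For ``$\Rightarrow$'' I argue contrapositively: if $\rho\le\rho_{i,\text{min}}$ then $\rho<\rho_i$, so the wave is a rarefaction and its trailing speed $\lambda_1(V_i)=L_l^\prime(\rho;U_i)\ge0$ is not negative. I expect the only delicate points to be pinning down what ``wave speed'' means for a rarefaction fan (one must control the fast edge $\lambda_1(V_i)$, not merely $\lambda_1(U_i)$) and the uniqueness of $\rho_{i,\text{min}}$; the rest is the single concavity estimate applied on each branch.
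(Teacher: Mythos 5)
Your proposal is correct and follows essentially the same route as the paper: the identity $\lambda_1(\rho,L_l(\rho;U_i))=L_l^\prime(\rho;U_i)$, concavity of $L_l$ to locate and order $\rho_{i,\text{min}}<\rho_i$, the estimate $s^\prime(\rho)\leq 0$ giving $s(\rho)\leq s(\rho_i)=\lambda_1(U_i)<0$ on the shock branch, and the contrapositive rarefaction argument for the other direction. Your added care about the fast edge $\lambda_1(V_i)$ of the rarefaction fan and the Rankine--Hugoniot identification of the shock speed only makes explicit what the paper leaves implicit.
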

For outgoing pipes one defines $\rho_{i,\text{min}}\geq 0$ such that $L_r^\prime(\rho_{i,\text{min}};U_i) = 0$, if possible, or $\rho_{i,\text{min}} = 0$ otherwise and obtains in the same way
\begin{prop}
  Let Proposition \ref{prop:proposition_b} be fulfilled for the pressure.  On an outgoing pipe the wave speed between $U_i$ and $V_i= (\rho,L_r(\rho;U_i))$ is positive if and only if $\rho>\rho_{i,\text{min}}$.
\end{prop}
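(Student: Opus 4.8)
The plan is to transcribe the proof of the preceding proposition, swapping $L_l$ for $L_r$, the first characteristic field for the second, and ``concave'' for ``convex''. The two structural facts I would record first are: (i) $\lambda_2(\rho,L_r(\rho;U_i)) = L_r^\prime(\rho;U_i)$, which one reads off by comparing the eigenvalue formula for $\lambda_2$ with the derivative \eqref{eq:derivative_L_r} on both the rarefaction branch $\rho\le\rho_i$ and the shock branch $\rho\ge\rho_i$; and (ii) $L_r$ is \emph{convex}. For (ii), note that \eqref{eq:9} gives $-L_r(\rho;\rho_i,q_i)=L_l(\rho;\rho_i,-q_i)$, and since ``Proposition~\ref{prop:proposition_b} is fulfilled for the pressure'' means, via the equivalence in Proposition~\ref{prop:proposition_b}, that condition~\ref{item:concave} holds for $L_l$ at every sub-sonic state, the reflected curve $-L_r$ is concave, i.e. $L_r^{\prime\prime}\ge 0$; equivalently $L_r^{\prime\prime}=-L_l^{\prime\prime}$ is immediate from \eqref{eq:derivative_L_l} and \eqref{eq:derivative_L_r}. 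Consequently $L_r^\prime$ is increasing, so a zero $\rho_{i,\text{min}}$ of $L_r^\prime$ is unique if it exists, and since $L_r^\prime(\rho_i;U_i)=\lambda_2(U_i)=\frac{q_i}{\rho_i}+c(\rho_i)>0$ by sub-sonicity, one always has $\rho_{i,\text{min}}<\rho_i$ (with the convention $\rho_{i,\text{min}}=0$ when $L_r^\prime$ has no zero).

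For the ``$\Leftarrow$'' direction, let $\rho>\rho_{i,\text{min}}$. Since $L_r^\prime$ is increasing, $\lambda_2(V_i)=L_r^\prime(\rho;U_i)>0$. If $V_i$ lies on the rarefaction branch, $\rho_{i,\text{min}}<\rho\le\rho_i$, the wave joining the left state $V_i$ to the right state $U_i$ is a $2$-rarefaction whose speeds increase from $\lambda_2(V_i)>0$ up to $\lambda_2(U_i)$, so its governing (slowest) speed $\lambda_2(V_i)$ is positive. If $\rho\ge\rho_i$, $V_i$ lies on the shock branch and the Rankine--Hugoniot speed is the secant slope $s(\rho)=\bigl(L_r(\rho;U_i)-L_r(\rho_i;U_i)\bigr)/(\rho-\rho_i)$, using $L_r(\rho_i;U_i)=q_i$; differentiating gives $s^\prime(\rho)=\bigl(L_r^\prime(\rho;U_i)(\rho-\rho_i)-(L_r(\rho;U_i)-L_r(\rho_i;U_i))\bigr)/(\rho-\rho_i)^2\ge 0$ by convexity of $L_r$, hence $s(\rho)\ge\lim_{\rho\downarrow\rho_i}s(\rho)=L_r^\prime(\rho_i;U_i)=\lambda_2(U_i)>0$. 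In either case the wave speed is positive.

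For ``$\Rightarrow$'' I argue by contraposition: if $\rho\le\rho_{i,\text{min}}$, then $\rho\le\rho_{i,\text{min}}<\rho_i$, so $V_i$ sits on the rarefaction branch and the wave speed is $\lambda_2(V_i)=L_r^\prime(\rho;U_i)\le L_r^\prime(\rho_{i,\text{min}};U_i)=0$ since $L_r^\prime$ is increasing; thus the wave speed is non-positive, and the equivalence is complete.

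The argument carries essentially no new difficulty beyond careful sign bookkeeping: the points to get right are that the reflection \eqref{eq:9} converts concavity of $L_l$ into convexity of $L_r$ (so $L_r^\prime$ increases rather than decreases), that on an outgoing pipe $V_i$ is the \emph{left} state of the $2$-wave so the governing speed of a rarefaction is $\lambda_2(V_i)$, and that sub-sonicity is what forces both $\lambda_2(U_i)>0$ and $\rho_{i,\text{min}}<\rho_i$. If one prefers to avoid repeating the computation altogether, one can instead apply the preceding proposition directly to the reflected Riemann data $(\rho_i,-q_i)$, using \eqref{eq:9} together with the fact that reflecting the velocity interchanges the roles of the two characteristic families and reverses all wave speeds.
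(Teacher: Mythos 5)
Your proof is correct and follows exactly the route the paper intends: the paper proves the incoming-pipe case in detail and states that the outgoing case ``is obtained in the same way,'' and your argument is precisely that mirrored proof (convexity of $L_r$ via \eqref{eq:9}, $\lambda_2(V_i)=L_r^\prime$ on the rarefaction branch, the secant-slope estimate for the shock branch, and sub-sonicity giving $\rho_{i,\text{min}}<\rho_i$). The minor looseness in asserting $\lambda_2(\rho,L_r(\rho;U_i))=L_r^\prime(\rho;U_i)$ also on the shock branch is inherited from the paper's own statement for $\lambda_1$ and is never actually used there, so it does not affect correctness.
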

Coming back to junctions we define the \emph{junction minimal density} as 
\begin{equation}
  \label{eq:junction_minimal_density}
  \rho_\text{min} = \max_{i \in I\cup O}\rho_{i,\text{min}}\ .
\end{equation}
A solution $(V_i)_{i \in I\cup O}$ to the junction Riemann problem is admissible if and only if the density $\rho$ at the junction fulfills $\rho>\rho_\text{min}$.
If it is not admissible, then in at least one pipe there is a super-sonic gas flow.

Note that a usual Riemann problem with sub-sonic initial conditions can be treated as a junction with one ingoing and one outgoing pipe.
In this case only one new state $V$ is created and the admissibility criterion guarantees that $\lambda_1(V)<0<\lambda_2(V)$.
For the sake of completeness, we classify the solutions to the usual Riemann problem by wave types in Table \ref{tab:wave_types}.

\begin{table}[h]
  
  \caption{(r)arefaction waves and (s)hocks for different values of the density $\rho$\label{tab:wave_types}.}
  \begin{subtable}[l]{0.49\textwidth}
    \caption{ wave types for $\rho_l\leq \rho_r$.}
  \label{tab:1}
  \begin{tabular}[h]{l@{\hskip 3pt}r@{\hskip 3pt}c@{\hskip 3pt}l@{\hskip 3pt}l@{\hskip 20pt}cc} 
    \toprule
    \multicolumn{5}{c}{value of $\rho$} & left wave & right wave\\
    \midrule
    &&$\rho$&$\leq$& $\rho_\text{min}$&\multicolumn{2}{c}{invalid} \\ 
    $\rho_\text{min}$&$<$&$\rho$&$\leq$&$ \rho_l$& r &r\\ 
    $\rho_l$&$ \leq $&$\rho$&$\leq $&$\rho_r$& s &r\\ 
    $\rho_r $&$\leq$&$ \rho$&&& s &s\\
    \bottomrule
  \end{tabular}
\end{subtable}
\begin{subtable}[r]{0.49\textwidth}
  \caption{wave types for $\rho_r\leq \rho_l$.}
  \label{tab:2}
  \begin{tabular}[h]{l@{\hskip 3pt}r@{\hskip 3pt}c@{\hskip 3pt}l@{\hskip 3pt}l@{\hskip 20pt}cc} 
    \toprule
    \multicolumn{5}{c}{value of $\rho$} & left wave & right wave\\
    \midrule
                                        &&$\rho$&$\leq$& $\rho_\text{min}$&\multicolumn{2}{c}{invalid} \\ 
    $\rho_\text{min}$&$<$&$\rho$&$\leq$&$ \rho_r$& r &r\\ 
    $\rho_r$&$ \leq $&$\rho$&$\leq $&$\rho_l$& r &s\\ 
    $\rho_l $&$\leq$&$ \rho$&&& s &s\\
    \bottomrule
  \end{tabular}
\end{subtable}
\end{table}

\subsection{Additional constraints for consistency}
\label{sec:addit-constr-cons}
We have found a complete list of conditions for a usual Riemann problem and a junction Riemann problem to be well-posed.
For the usual Riemann problem this is enough, as just noted.
Yet our conditions do not guarantee that the solution states at the junction are sub-sonic, as only one of each eigenvalue is restricted.
Since sub-sonic states are necessary at the junction, we introduce an additional condition by hand. 
We again focus on one incoming pipe $i$ with initial condition $U_i$ and new state $V_i= (\rho,L_l(\rho;U_i))$.
We define
\begin{equation}
  \label{eq:rho_max_pipe}
  \rho_{i,\text{max}} = \min(\set{\rho>0|\lambda_2(\rho,L_l(\rho;U_i))<0}\cup \set{\infty})
\end{equation}
and further
\begin{equation}
  \label{eq:rho_max_junction}
    \rho_\text{max} = \min_{i \in I\cup O}\rho_{i,\text{max}}.
\end{equation}
Although there is no knowledge about $\lambda_2(V_i)$ as it is not conveniently given by the derivative of a Lax curve, we still have
$\lambda_2(V_i)=\lambda_1(V_i)+2c(\rho)>\lambda_1(V_i)$, so there holds at least $\rho_{i,\text{max}}>\rho_{i,\text{min}}$ in every pipe.  But there is no guarantee that $\rho_\text{max}>\rho_\text{min}$.




\section{Coupling to power grids}
\label{sec:coupling-power-grids}
In this section, we focus on the coupling of the gas network to power grids.
Following the ideas in \cite{aleksey_michael,Zlotnik2016}, the coupling is done extending the already existing coupling conservation of fluxes \eqref{eq:flux_conservation} by a gas power plant.
To model the power grid, we use the well-known powerflow equations \cite{grainger2016power}, which describe real and reactive power nodewise.
We have to investigate again under which assumptions this kind of coupling is well-posed.

\subsection{Power flow model}
\label{sec:power-flow-model}
Power grids are usually modeled as a graph, whose nodes (buses) are indexed by $k$ and whose edges (transmission lines) are indexed by the indices of the nodes connected by the edge.
For each node $k \in V_{\PG}$ of the power grid (PG) there are four (time-dependent) state variables, namely real power $P_k(t)$, reactive power $Q_k(t)$, voltage magnitude $\abs{V_k(t)}$ and voltage angle $\phi_k(t)$.
We consider three different types of nodes: load/PQ buses ($P$ und $Q$ are given), generater/PV buses ($P$ and $\abs{V}$ are given) and a single slack bus ($\abs{V}$ and $\phi$ are given).
Thus we always have two unknowns per node $k \in V_\PG$.
Accordingly, we apply the powerflow equations for real and reactive power for each node:
\begin{subequations}
\begin{align}
 P_k &= \sum\limits_{j\in V_{\PG}} \vert V_k \vert \, \vert V_j \vert \, \big( G_{kj} \cos(\va_k - \va_j) + B_{kj} \sin(\va_k - \va_j) \big),\label{eq:P}\\
 Q_k &= \sum\limits_{j\in V_{\PG}} \vert V_k \vert \, \vert V_j \vert \, \big( G_{kj} \sin(\va_k - \va_j) - B_{kj} \cos(\va_k - \va_j) \big),\label{eq:Q}
\end{align}
\end{subequations}
where $G_{kj}$ is the real part of the entry $y_{kj}$ in the bus admittance matrix and $B_{kj}$ is the imaginary part.
For $k \ne j$ we consider $G_{kj}$ and $B_{kj}$ as properties of the arc/transmission line connecting nodes $k$ and $j$.
$G_{kk}$ and $B_{kk}$ are considered as node properties.
\begin{figure}[htb]
  \centering
  \includegraphics{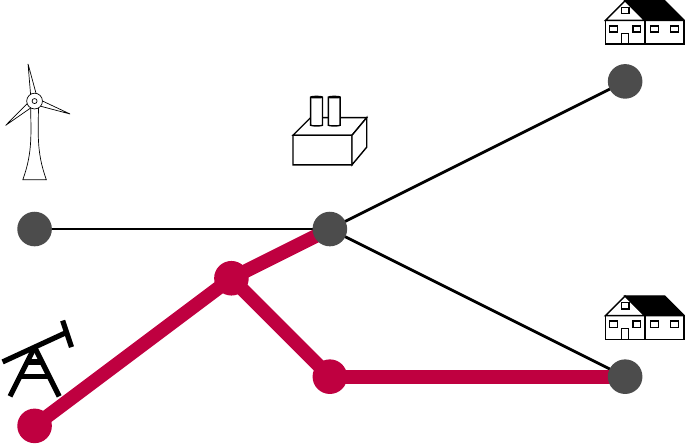}
\caption{Sketch of a gas network (purple) coupled to a power grid (black).\label{fig:coupling}}
\end{figure}

\subsection{Gas-to-power coupling}
\label{sec:coupling-1}
We focus on a junction with one incoming pipeline, one outgoing pipeline and an outlet that draws a set amount of flow $\fuel$, which is converted to electric power.
The coupling condition is taken from \cite{aleksey_michael} and reads
\begin{subequations}\label{eq:31}
  \begin{align}
    p_{\text{in}} &= p_{\text{out}}\label{eq:13}\\
    q_{\text{in}} &= q_{\text{out}} +\fuel\ ,\label{eq:11}
  \end{align}
  \end{subequations}
where the outflow $\fuel$ is non-negative.  
The fuel is converted to gas via the following \emph{heat rate} formula, taken from \cite{Zlotnik2016},
\begin{equation}
\fuel(P) = a_0 + a_1 P + a_2 P^2, \label{eqn:generatorconsumption} 
\end{equation} 
where $P$ is the real power and $a_0$, $a_1$, $a_2$ are constants.    

To find what $\fuel$ are allowed, we examine another generalized Riemann problem at the junction. We start with two constant states on the ingoing and outgoing pipes and solve the Riemann problem where the flow $q$ jumps by an amount $\fuel$ at the junction.  We again demand the waves to leave the junction, giving rise to a picure like that in Figure \ref{fig:solution},
\begin{figure}[h]
  \centering
\includegraphics{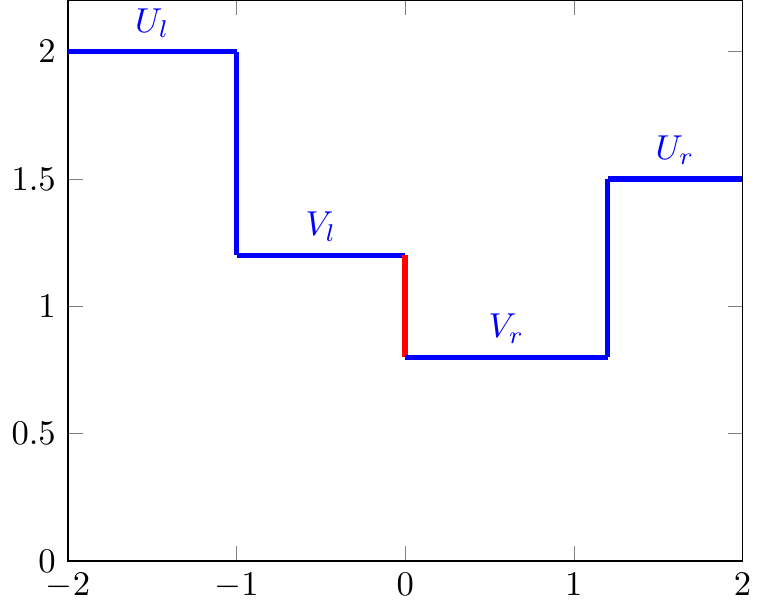}
  \caption{Solution after a short time.}
  \label{fig:solution}
\end{figure}
where next to the junction in red two new states $V_l$, $V_r$ appear that fulfill the coupling conditions \eqref{eq:31}.  
The analysis of this setting is similar to a three-way junction but with the flow on one outgoing pipe fixed to $\fuel$.  We must now solve the equation
\begin{equation}\label{eq:7}
  L_l(\rho) - L_r(\rho) = \fuel\ ,
\end{equation}
which is similar to equation \eqref{eq:10} of a usual (two-way) Riemann problem but with a non-zero right-hand side.  
We even can forego the additional consistency constraints of section \ref{sec:addit-constr-cons}:
One half of the eigenvalues is on the right side of zero due to the $\rho_\text{min}$-criterion, which we need here again to make the waves have the right direction.
\begin{equation*}
  \lambda_1(V_l)\leq 0 \quad \text{and} \quad  \lambda_2(V_r)\geq 0\ ,
\end{equation*}
Instead of invoking $\rho_\text{max}$, we note that $\fuel\geq 0$ and compute for the remaining two eigenvalues
\begin{equation*}
    \lambda_2(V_l)  = \frac{L_l(\rho)}{\rho}+c(\rho) 
     = \frac{L_r(\rho)+\fuel}{\rho}+c(\rho)
     = \lambda_2(V_r)+\frac{\fuel}{\rho}
     \geq \lambda_2(V_r)
     \geq 0\ ,
\end{equation*}
and
\begin{equation*}
    \lambda_1(V_r)  = \frac{L_r(\rho)}{\rho}-c(\rho) 
      = \frac{L_l(\rho)-\fuel}{\rho}-c(\rho) 
      \lambda_1(V_l)-\frac{\fuel}{\rho}
     \leq \lambda_1(V_l)
     \leq 0\ .
\end{equation*}

\begin{remark}
  In our setting this is sufficient.  If we also considered power-to-gas plants, we would need to also treat negative $e$.  In this case, we would have to adhere to the $\rho_\text{max}$-criterion \eqref{eq:rho_max_junction} again.
\end{remark}

\subsubsection*{Solution structure for different outflows $\fuel$}
\label{sec:what-e-produces}
For the usual Riemann problem we had a unique non-zero solution due to our findings in Section \ref{sec:analysisOutflowBounds}.  For $\fuel>0$ we now have two solutions to $L_l(\rho) - L_r(\rho) = \fuel$, one of which is not admissible as it lies to the left of the maximum and hence has non-negative derivative (which means it would be super-sonic).
As $L_l-L_r$ is decreasing in the admissible regime, greater $\fuel$ result in smaller $\rho$.  One of the two solution structures is given in table \ref{tab:solstruct}.

\begin{table}[h]
  \centering
  \caption{Solution structure for $\rho_l\leq \rho_r$.\label{tab:solstruct}}
  \begin{tabular}[h]{l@{\hskip 3pt}r@{\hskip 3pt}c@{\hskip 3pt}l@{\hskip 3pt}l@{\hskip 30pt}cc} 
    \toprule
    \multicolumn{5}{c}{value of $\fuel$} & left wave & right wave\\
    \midrule
    &&$\fuel$&$ \geq $&$(L_l-L_r)(\rho_\text{min})$ & \multicolumn{2}{c}{invalid} \\
    $(L_l-L_r)(\rho_\text{min})$&$>$&$ \fuel$&$\geq$&$ (L_l-L_r)(\rho_l)$ &r &r\\
    $(L_l-L_r)(\rho_l)$&$\geq $&$\fuel$&$\geq $&$(L_l-L_r)(\rho_r)$ & s& r\\
    $(L_l-L_r)(\rho_r)$&$\geq$&$ \fuel$ &&& s& s\\
    \bottomrule
  \end{tabular}
    \end{table}

\section{Numerical results}
\label{sec:numerics}

Within the following numerical examples, we consider two different discretization schemes. The first is a third-order CWENO scheme (CWENO3) with suitable boundary treatment \cite{Kolb2014,NaumannKolbSemplice2018}, which relies on a local Lax-Friedrichs flux function for the inner discretization points of each pipe and handles coupling points by explicitly solving equation \eqref{eq:7}.

The second scheme is an implicit box scheme (IBOX)~\cite{KolbLangBales2010}, suitable for sub-sonic flows. For a general system of balance laws
\begin{equation}
  \label{eq:4}
  U_t + F(U)_x = G(U),
\end{equation}
the considered scheme reads
\begin{multline}
  \label{eq:6}
  \frac{U^{n+1}_{j-1} + U^{n+1}_{j}}{2} = \frac{U^{n}_{j-1} + U^{n}_{j}}{2} - \frac{\Delta t}{\Delta x}\left( F(U^{n+1}_j)- F(U^{n+1}_{j-1}) \right) + \Delta t \frac{G(U^{n+1}_j)+G(U^{n+1}_{j-1})}{2}.
\end{multline}
Here, $\Delta t$ and $\Delta x$ are the temporal and spatial mesh size, respectively, and the numerical approximation is thought in the following sense:
\begin{equation}
   U_j^n \approx U(x,t) \quad \text{for} \quad x\in \big[ (j-\tfrac{1}{2})\Delta x , (j+\tfrac{1}{2})\Delta x \big) , \ t \in \big[ n\Delta t , (n+1)\Delta t \big).
\end{equation}

The implicit box scheme has to obey an inverse CFL condition~\cite{KolbLangBales2010}, which is beneficial for problems with large characteristic speeds whereas the solution is merely quasi-stationary. This is usually the case for daily operation tasks in gas networks and therefore motivates the choice of this scheme for the real-world scenario below.

The first test example in section~\ref{sec:validation} is supposed to demonstrate the different cases revealed in the analysis above (section~\ref{sec:coupling-1}). Further, since the applied implicit box scheme does not explicitly make use of any Riemann solver, this scenario is also considered as a numerical validation of its applicability, where the CWENO3 scheme with Riemann solver at the junction serves as reference.

Within the second example (section~\ref{sec:numericalResultsDifferentPressure}) we briefly demonstrate the differences resulting from various pressure functions, which are all covered by our theoretical results above.

The third example (section~\ref{sec:numericalResultsGasPower}) considers a more complex scenario: An increasing power demand within the power grid leads to an increasing fuel demand of a gas-to-power generator and further to a significant pressure drop in the gas network. 

\subsection{Validation}\label{sec:validation}
We consider the isentropic Euler equations with pressure law $p(\rho) = \kappa \rho^\gamma$ and parameters $\kappa = 1.0$, $\gamma = 1.4$, and a Riemann problem with left state $U_l = \begin{pmatrix}4.0\\ 1.0\end{pmatrix}$ and right state $U_r = \begin{pmatrix}3.0\\ -1.0\end{pmatrix}$. Further we assume a gas demand $\fuel$ at the coupling point of the two states (here $x=0$). Then, from table~\ref{tab:solstruct}, we get the following solution structure:
\begin{itemize}
 \item s-s solution for $\fuel \le 0.57877$,
 \item r-s solution for $0.57877 \le \fuel \le 3.0594$,
 \item r-r solution for $3.0594 \le \fuel$.
\end{itemize}
Further, one can easily compute $\rho_{1,\min} \approx 1.8819$, $\rho_{2,\min} \approx 1.5041$, and therewith $\rho_{\min} \approx 1.8819$ and maximum gas demand $\fuel \approx 4.3892$. We will consider the numerical simulation of the described setting until time $t=0.1$ for $\fuel\in\{0.25, 1.75, 3.25\}$ and the following discretization parameters:
\begin{itemize}
 \item CWENO3: $\Delta t=5\cdot10^{-5}$, $\Delta x=5\cdot10^{-4}$,
 \item IBOX: $\Delta t=5\cdot10^{-4}$, $\Delta x=5\cdot10^{-5}$.
\end{itemize}
The different choices result from the (usual) CFL condition the explicit CWENO3 scheme has to obey, in contrast to the inverse CFL condition of the IBOX scheme.
Figures~\ref{fig:densityMinus} to~\ref{fig:densityPlus} show the computed densities at the final time. Both schemes show the correct solution structure (shock/rarefaction waves), where CWENO3 expectably achieves the sharper resolution.



\begin{figure}[htb]
\centering
\includegraphics{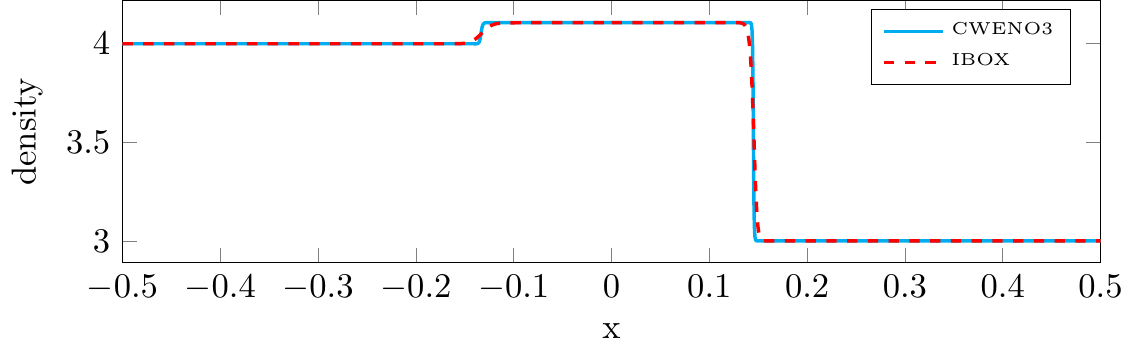}
 \caption{Density profile at $t=0.1$ for $\fuel=0.25$. (s-s solution)}
 \label{fig:densityMinus}
\end{figure}

\begin{figure}[htb]
\centering
  \includegraphics{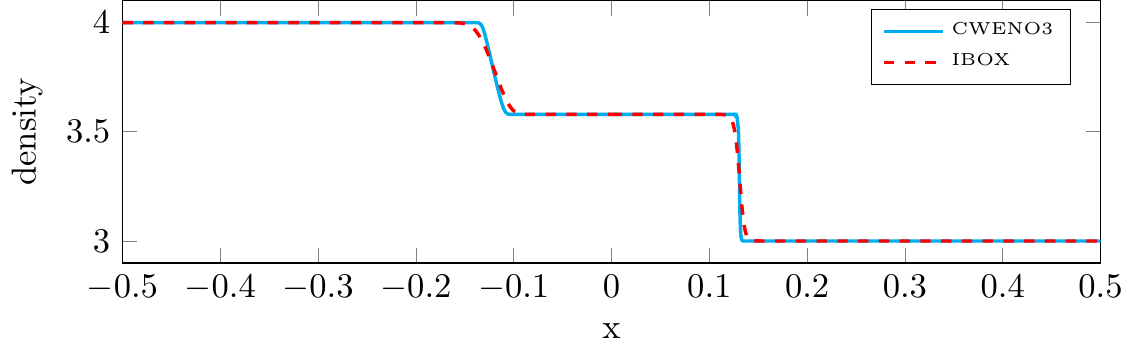}
  \caption{Density profile at $t=0.1$ for $\fuel=1.75$. (r-s solution)}
 \label{fig:density0}
\end{figure}

\begin{figure}[htb]
\centering
\includegraphics{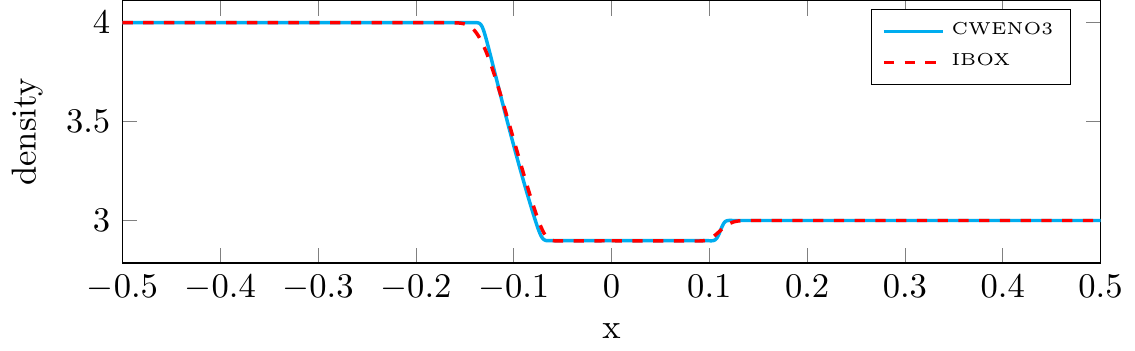}
 \caption{Density profile at $t=0.1$ for $\fuel=3.25$. (r-r solution)}
 \label{fig:densityPlus}
\end{figure}


\subsection{Different pressure laws}\label{sec:numericalResultsDifferentPressure}
In our second test case, we apply various pressure laws, which are all covered by our theoretical study, and are interested in the different dynamics one may observe even on a single pipeline. Therefore, we consider a single pipe with length $l=0.1$ and the following pressure laws:
\begin{itemize}
 \item $p(\rho) = \frac{1}{\gamma} \rho^\gamma$ with $\gamma = 1.4$ (``gamma law''),
 \item $p(\rho) = \frac{-1}{\rho}$ (``inverse'', corresponding to $\gamma = -1$),
 \item $p(\rho) = \ln(\rho)$ (``logarithmic''),
 \item $p(\rho) = \frac{1}{10} \sum\limits_{i=1}^{10} \frac{\rho^{1 + i/5}}{1 + i/5}$ (``sum of gamma laws'').
\end{itemize}
Note that all considered pressure functions are scaled in such a way that $p'(\rho=1) = 1$. Initially, we have $\rho = 1$ and $q = 0$ in the whole pipe. Further, we fix $\rho = 1$ on the left-hand boundary, whereas $q$ at the right-hand boundary linearly increases from $0$ to $0.2$ until time $t=0.1$ and stays constant afterwards until the final time $t=0.5$. We approximate the solution to this problem by CWENO3 with discretization parameters $\Delta t = 5 \cdot 10^{-4}$ and $\Delta x = 10^{-3}$. The variety of the resulting dynamics is demonstrated in figures~\ref{fig:pressure025} and~\ref{fig:pressure050}, which show the density in the pipeline at times $t=0.25$ and $t=0.5$, respectively.

\begin{figure}[htb]
  \centering
  \includegraphics{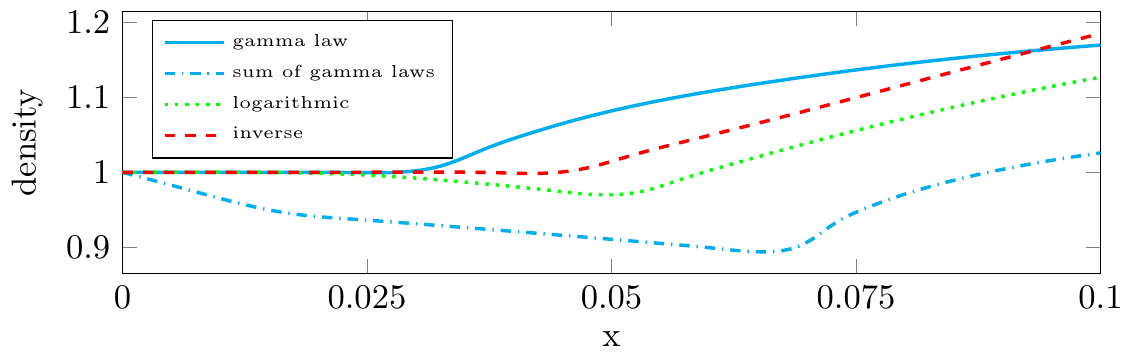}
  \caption{Simulation result at time $t=0.25$ for different pressure laws.}
 \label{fig:pressure025}
\end{figure}

\begin{figure}[htb]
  \centering
  \includegraphics{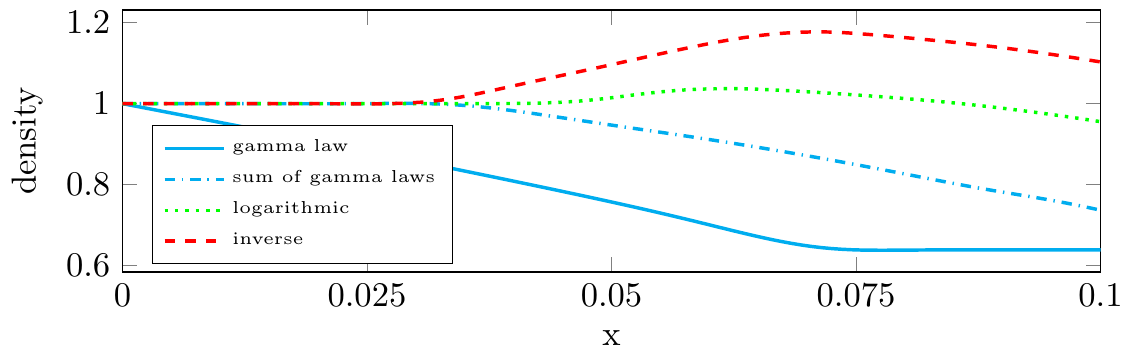}
  \caption{Simulation result at time $t=0.5$ for different pressure laws.}
 \label{fig:pressure050}
\end{figure}


\subsection{Coupled gas and power grid}\label{sec:numericalResultsGasPower}
We consider the network(s) depicted in figure~\ref{fig:gas2power}, containing a power grid from the example ``case9'' of the MATPOWER Matlab programming suite~\cite{MATPOWER} and a small part of the GasLib-40 network~\cite{Humpola_et_al:2015}, extended by a compressor station in front of node S17 and a gas-to-power generator between S4 and N1, providing the necessary power at the latter node. The parameters of the gas network are given in table~\ref{tab:gasNetwork}, the parameters of the power grid in table~\ref{tab:powerGrid}.

\begin{figure}[h!]
 \centering
\includegraphics{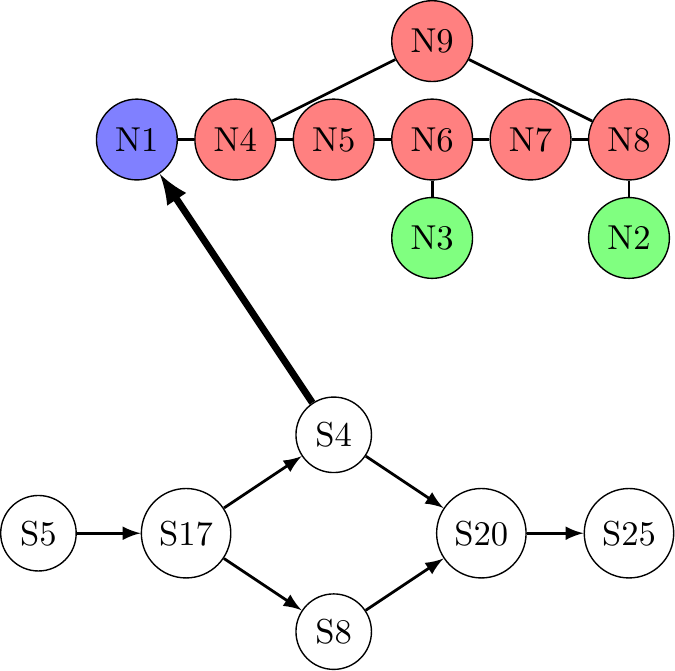}

 \caption{Gas network connected to a power grid. Red nodes are \textcolor{red!50}{PQ/demand nodes}, green nodes are generators (\textcolor{green!50}{PV nodes}) and the blue node is the \textcolor{blue!50}{slack bus} (also a generator, with gas consumption of the form $\fuel(P)=a_0 + a_1 P + a_2 P^2$).}
 \label{fig:gas2power}
\end{figure}

\begin{table}[hbt]
  \centering
  \caption{Parameters of the gas network.\label{tab:gasNetwork}}
    \begin{tabular}{@{}llllll}
    \toprule
    Pipe & From & To & Length [km] & Diameter [mm] & Roughness [mm]\\
    \midrule
    P10 & S4  & S20 & 20.322 & 600 & 0.05\\
    P20 & S5  & S17 & 20.635 & 600 & 0.05\\
    P21 & S17 & S4  & 10.586 & 600 & 0.05\\
    P22 & S17 & S8  & 10.452 & 600 & 0.05\\
    P24 & S8  & S20 & 19.303 & 600 & 0.05\\
    P25 & S20 & S25 & 66.037 & 600 & 0.05\\
    P99 & S4  & S8  & \phantom{1}5.000 & 600 & 0.05\\
    \bottomrule
  \end{tabular}
\end{table}

\begin{table}[hbt]
   \caption{Parameters of the power grid\label{tab:powerGrid}.}
  \begin{subtable}{0.4\textwidth}
    \centering
    \subcaption{Busses}
    \begin{tabular}{@{}lll}
   \toprule
  Node & $G$ & $B$ \\
  \midrule
  N1 & 0.0000 & -17.3611 \\
  N2 & 0.0000 & -16.0000 \\
  N3 & 0.0000 & -17.0648 \\
  N4 & 3.3074 & -39.3089 \\
  N5 & 3.2242 & -15.8409 \\
  N6 & 2.4371 & -32.1539 \\
  N7 & 2.7722 & -23.3032 \\
  N8 & 2.8047 & -35.4456 \\
  N9 & 2.5528 & -17.3382 \\
  \bottomrule
    \end{tabular}
  \end{subtable}
  \centering
      \begin{subtable}{0.4\textwidth}
     \subcaption{Transmission lines}
     \begin{tabular}{@{}lllll}
    \toprule
  Edge & From & To & $G$ & $B$ \\
  \midrule
  TL14 & N1 & N4 & \phantom{-}0.0000 & 17.3611 \\
  TL45 & N4 & N5 & -1.9422 & 10.5107 \\
  TL56 & N5 & N6 & -1.2820 & \phantom{1}5.5882 \\
  TL36 & N3 & N6 & \phantom{-}0.0000 & 17.0648 \\
  TL67 & N6 & N7 & -1.1551 & \phantom{1}9.7843 \\
  TL78 & N7 & N8 & -1.6171 & 13.6980 \\
  TL82 & N8 & N2 & \phantom{-}0.0000 & 16.0000 \\
  TL89 & N8 & N9 & -1.1876 & \phantom{1}5.9751 \\
  TL94 & N9 & N4 & -1.3652 & 11.6041 \\
  \bottomrule
     \end{tabular}
   \end{subtable}
\end{table}

Within each pipe $e\in E_\Gas$ of the gas network (with length $l_e$, diameter $d_e$, cross section $A_e$, roughness $k_e$) we consider the isothermal Euler equations
%
%
with acoustic speed $c=340 \frac{\text{m}}{\text{s}}$.
The source term in the momentum equation is given by
\[
 S(\rho,q) = -\frac{\lambda(q)}{2d_e} \frac{q \vert q\vert}{\rho}
\]
with friction factor $\lambda(q)$, which is determined by the Prandtl-Colebrook formula:
\begin{equation*}
 \frac{1}{\sqrt{\lambda}} = -2\log_{10} \left( \frac{2.51}{\Re(q) \sqrt{\lambda}} + \frac{k_e}{3.71 d_e} \right)
\end{equation*}
with Reynolds number
\begin{equation*}
 \Re(q) = \frac{d_e}{\eta} q
\end{equation*}
and dynamic viscosity $\eta = 10^{-5} \frac{\text{kg}}{\text{ms}}$.


Initially, the gas network is in a stationary state: The pressure at S5 is fixed at 60bar, the ouflow at S25 is $q=100 \frac{\text{m}^3}{\text{s}} \cdot \frac{\rho_0}{A_e}$ with $\rho_0=0.785 \frac{\text{kg}}{\text{m}^3}$, and there is an additional gas consumption at S4 resulting from the gas to power transformation ($a_0=2$, $a_1=5$, $a_2=10$) due to the power demand at the slack bus N1. The initial (stationary) state of the power grid is determined by boundary conditions given in table \ref{tab:powerGridInitial}.

\begin{table}[htb]
 \centering
 \caption{Initial boundary conditions of the power grid.}
    \begin{tabular}{@{}lllll}
    \toprule
  Node & $P$ & $Q$ & $\vert V\vert$ & $\va$\\
  \midrule
  N1 & -    & -   & 1 & 0 \\
  N2 & 163  & -   & 1 & - \\
  N3 & 85   & -   & 1 & - \\
  N4 & 0    & 0   & - & - \\
  N5 & -90  & -30 & - & - \\
  N6 & 0    & 0   & - & - \\
  N7 & -100 & -35 & - & - \\
  N8 & 0    & 0   & - & - \\
  N9 & -125 & -50 & - & - \\
  \bottomrule
 \end{tabular}
 \label{tab:powerGridInitial}
\end{table}

In the course of the simulation, the power (and reactive power) demand at N5 is(/are) linearly increased between $t=1$ hour and $t=1.5$ hours from $0.9$ \pu\ to $1.8$ \pu\ (reactive power from $0.3$ \pu\ to $0.6$ \pu) - see also figure~\ref{fig:powerN5slack} (left). Accordingly, the power demand at the slack bus N1 increases (see figure~\ref{fig:powerN5slack} (right)) and therewith the gas consumption at S4, which also results in an increase of the inflow at S5 (see figure~\ref{fig:inflowS5}). Due to the increased flow values, the pressure in the gas network decreases (see figure~\ref{fig:pressureS20S25} for the pressure at the nodes S20 and S25).

\begin{figure}[h!]
  \centering
  \begin{subfigure}{0.49\textwidth}
    \centering
    \includegraphics[width=\textwidth,height=0.2\textheight]{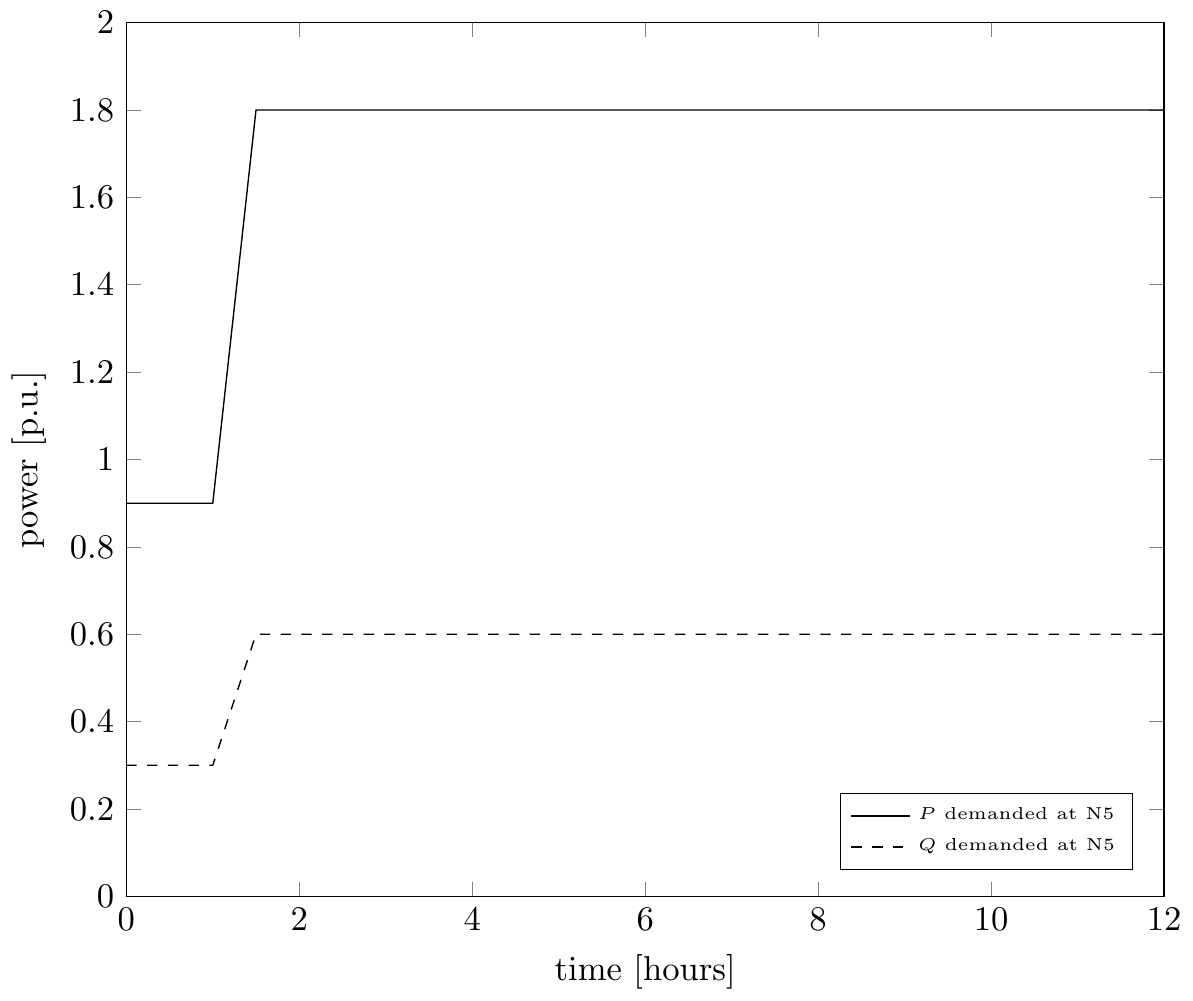}
  \end{subfigure}
  \begin{subfigure}{0.49\textwidth}
    \centering
    \includegraphics[width=\textwidth,height=0.2\textheight]{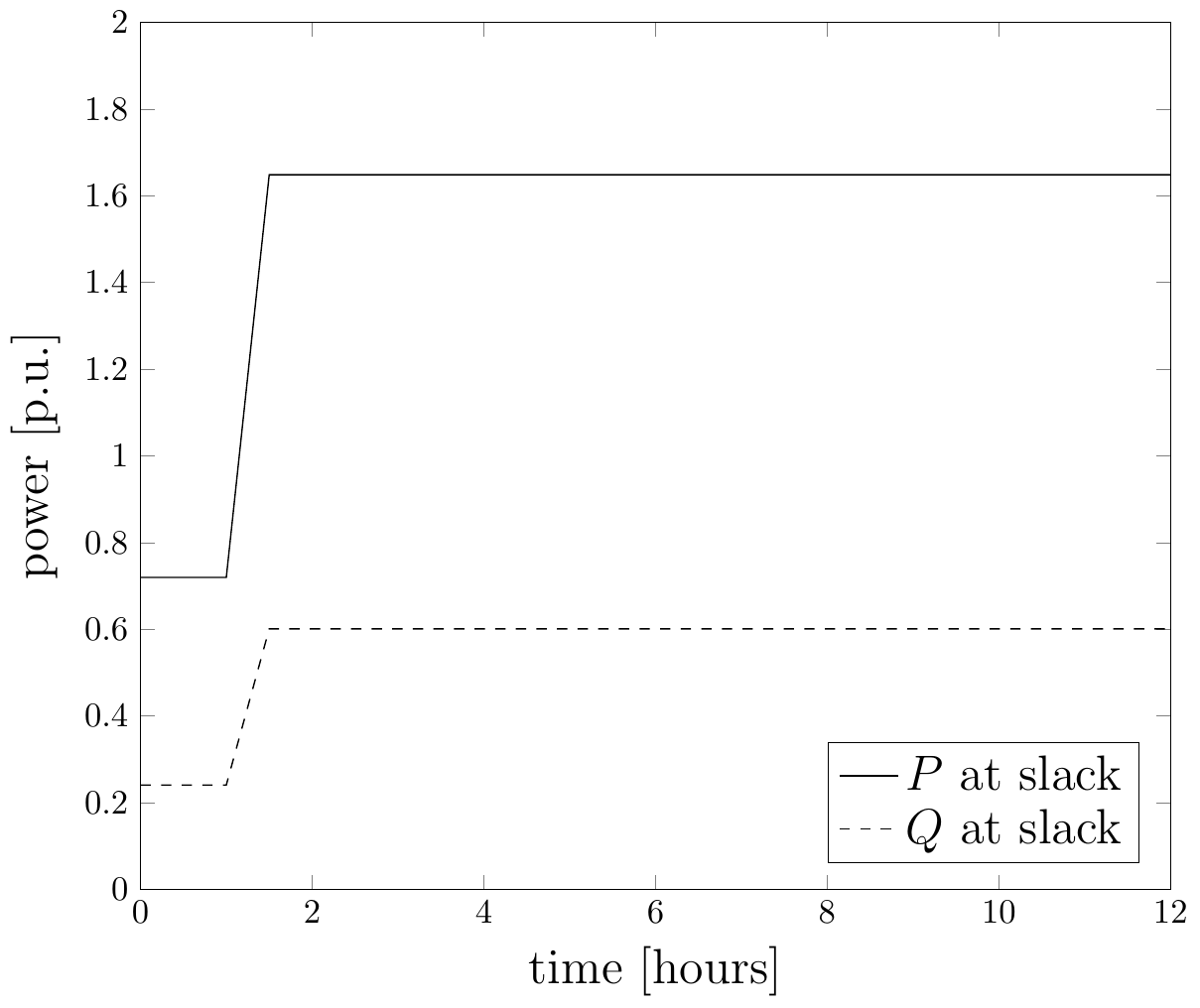}
  \end{subfigure}
  \caption{Power and reactive power at demand node N5 (left) and the slack bus (right).}
  \label{fig:powerN5slack}
\end{figure}

\begin{figure}[htb]
  \centering
  \includegraphics{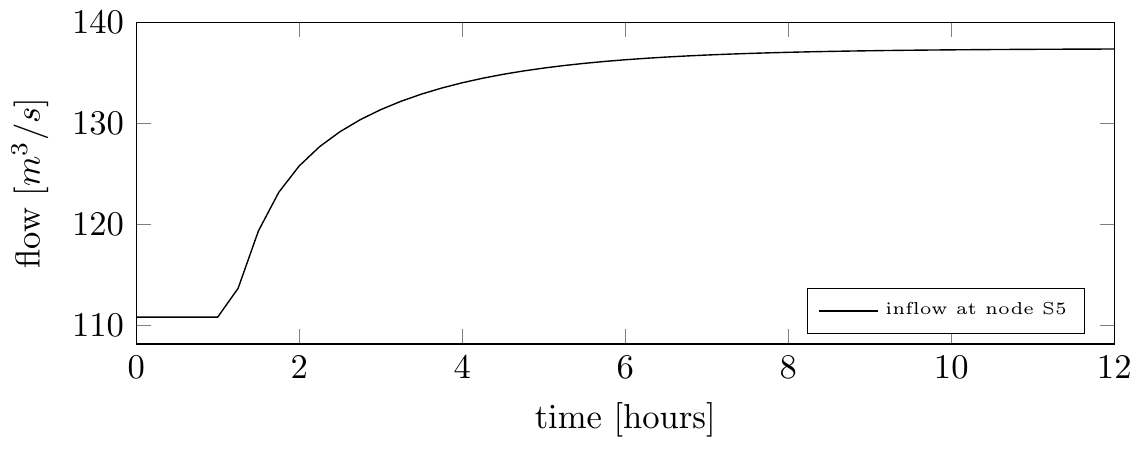}
 \caption{Inflow at node S5.}
 \label{fig:inflowS5}
\end{figure}

\begin{figure}[htb]
  \centering
  \includegraphics{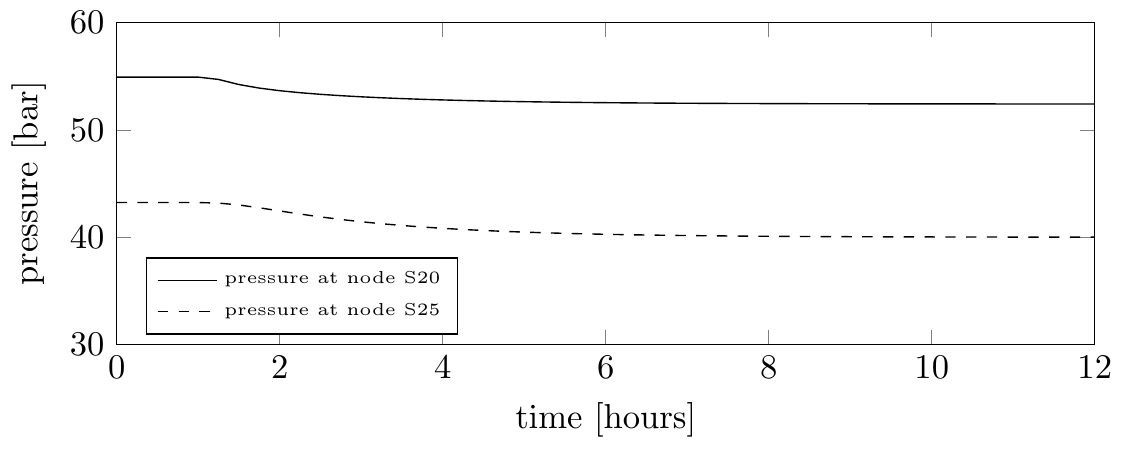}
 \caption{Pressure at nodes S20 and S25.}
 \label{fig:pressureS20S25}
\end{figure}




\section{Conclusion and future work}
We have presented a coupled model for gas and power
allowing for a mathematically well-defined transition from gas to power.
The framework involves also the consideration of non-standard pressure
functions. Various simulation results show the properties of the presented approach.

Future work includes the investigation of optimal control problems for the coupled model
as for example the control of compressor stations~\cite{Gugat2018} or the inclusion of uncertain customer demands~\cite{2018arXiv181005480G}.
This will immediately lead to the class of nonlinear (stochastic) optimization problems and tailored solution techniques.

\section*{Acknowledgments}
The authors gratefully thank the BMBF project ENets (05M18VMA) for the financial support.


\bibliographystyle{siam}
\bibliography{bibliography}

\appendix
\section{Proof of Lemma \ref{sec:condition-lemma}}
\label{sec:proofs-lemm-refs}

The following two technical results are the key ingredients to prove Lemma \ref{sec:condition-lemma}.
\begin{lemma}
  \label{lemma:1}
  Let $g \in C^1(\mathbb{R}^+, \mathbb{R}^+)$be a non-negative function, $g\geq0$, and let $G$ be given by $G(\rho) = \int_\rho^{\rho_l}g(s)\dif s$.  Then There holds
  \begin{enumerate}
  \item If $\rho^2g(\rho) \overset{\rho \to 0}{\to} 0$, then $\rho G(\rho) \overset{\rho \to 0}{\to} 0$.
  \item If $\rho G(\rho) \overset{\rho \to 0}{\to} 0$, then $\liminf_{\rho \to 0}\rho^2g(\rho) =0$.
  \end{enumerate}
\begin{proof}\ 
    \begin{enumerate}
    \item By assumption $\rho^2g(\rho) \overset{\rho \to 0}{\to} 0$. For $m \in
      \mathbb{N}$ choose $\rho_m >0$ such that $\rho^2g(\rho)\leq
      \frac{1}{m\rho^2}$ for $\rho<\rho_m$. Now choose $\rho_{m,0}<\rho_m$ so
      small that
    \begin{equation*}
      \rho_{m,0} \left(\int_{\rho_m}^{\rho_l}g(s)\dif s-\frac{1}{m\rho_m}\right)\leq \frac{1}{m}
    \end{equation*}
    Then, for $\rho <\rho_{m,0}$, there holds
    \begin{equation*}
      \begin{aligned}
        \rho\int_{\rho}^{\rho_l}g(s)\dif s &\leq \rho\int_{\rho_m}^{\rho_l}g(s)\dif s + \frac{1}{m}\rho\int_{\rho}^{\rho_m}\frac{1}{s^2}\dif s
         = \rho \left( \int_{\rho_m}^{\rho_l}g(s)\dif s-\frac{1}{m\rho_m}\right) +\frac{1}{m}\leq \frac{2}{m}
      \end{aligned}
    \end{equation*}
    for small enough $\rho$.  Therefore $\lim_{\rho\to 0}\rho \int_{\rho}^{\rho_l}g(s)\dif s \leq \frac{2}{m}$ for all $m \in \mathbb{N}$. As $g\geq 0$, we also have $0\leq \lim_{\rho\to 0}\rho \int_{\rho}^{\rho_l}g(s)\dif s$.  Summarizing, we get $\lim_{\rho\to 0}\rho G(\rho) = \lim_{\rho\to 0}\rho \int_{\rho}^{\rho_l}g(s)\dif s  = 0$.
\item     We prove by contradiction:  Assume there are $\rho_0>0$ and $a >0$ such that $\rho^2g(\rho) \geq a$ for $\rho < \rho_0$.  Then $g(\rho)\geq \frac{a}{2}\frac{1}{\rho^2}$ for such $\rho$.  Therefore
      \begin{equation*}
        \begin{aligned}
          \rho \int_\rho^{\rho_l}g(s)\dif s \geq \rho \int_{\rho_0}^{\rho_l}g(s)\dif s + \rho\frac{a}{2}\int_{\rho}^{\rho_0} \frac{1}{s^2} \dif s
          \to  0 + \frac{a}{2} \rho \left[ -\frac{1}{s} \right]_\rho^{\rho_0} \to \frac{a}{2}\ ,
      \end{aligned}
    \end{equation*}
    which contradicts $\rho G(\rho) \overset{\rho \to 0}{\longrightarrow} 0$.\qedhere
\end{enumerate}
\end{proof}
\end{lemma}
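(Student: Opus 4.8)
The plan is to establish the two implications of Lemma \ref{lemma:1} separately and then record how they, together with the extra monotonicity hypothesis, yield the ``if and only if'' of Lemma \ref{sec:condition-lemma}. Direction~1 is a routine comparison estimate; direction~2 is the substantive part and has to go by contradiction, because the assumption on $\rho G(\rho)$ bounds an integral of $g$ but says nothing pointwise about $g$.

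For direction~1, I would assume $\rho^2 g(\rho)\to 0$, fix $\varepsilon>0$, and choose $\delta\in(0,\rho_l)$ with $g(s)\le \varepsilon s^{-2}$ on $(0,\delta)$ (possible since $s^2 g(s)\to 0$). Splitting $G(\rho)=\int_\rho^{\delta}g(s)\dif s+\int_{\delta}^{\rho_l}g(s)\dif s$ for $\rho<\delta$, the second summand is a fixed constant, so its contribution to $\rho G(\rho)$ tends to $0$; for the first, $\rho\int_\rho^{\delta}g(s)\dif s\le \varepsilon\,\rho\int_\rho^{\delta}s^{-2}\dif s=\varepsilon(1-\rho/\delta)\le\varepsilon$. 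Combined with $\rho G(\rho)\ge 0$ (as $g\ge0$), this gives $0\le\liminf_{\rho\to0}\rho G(\rho)\le\limsup_{\rho\to0}\rho G(\rho)\le\varepsilon$ for all $\varepsilon>0$, hence $\rho G(\rho)\to0$.

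For direction~2, suppose $\liminf_{\rho\to0}\rho^2 g(\rho)=:a>0$. Then there is $\rho_0\in(0,\rho_l)$ with $g(\rho)\ge\tfrac{a}{2}\rho^{-2}$ on $(0,\rho_0)$, so for $\rho<\rho_0$
\[
  \rho G(\rho)\ \ge\ \rho\int_{\rho_0}^{\rho_l}g(s)\dif s+\tfrac{a}{2}\,\rho\int_{\rho}^{\rho_0}s^{-2}\dif s\ \overset{\rho\to0}{\longrightarrow}\ 0+\tfrac{a}{2}\ >\ 0,
\]
contradicting $\rho G(\rho)\to0$; this proves $\liminf_{\rho\to0}\rho^2g(\rho)=0$. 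To obtain Lemma \ref{sec:condition-lemma} from this, the second ingredient is the elementary observation that the hypothesis $(\rho^2 g(\rho))'\ge0$ makes $h(\rho):=\rho^2 g(\rho)$ non-decreasing on $\mathbb{R}^+$, so $\lim_{\rho\to0}h(\rho)=\inf_{\rho>0}h(\rho)=\liminf_{\rho\to0}h(\rho)$ exists; direction~1 then supplies one half of the equivalence with no extra assumption, while direction~2 plus this remark supplies the converse, since $\liminf_{\rho\to0}h(\rho)=0$ forces $\lim_{\rho\to0}h(\rho)=0$.

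The main obstacle is precisely the gap between $\liminf$ and $\lim$ in direction~2: from $\rho G(\rho)\to0$ alone one cannot do better, since a $g$ assembled from increasingly tall and thin spikes near $0$ can keep $\int_\rho^{\rho_l}g$ small while $\rho^2 g$ fails to converge, so the equivalence in Lemma \ref{sec:condition-lemma} genuinely needs the monotonicity of $\rho^2 g$. Beyond that, the only care required is the routine bookkeeping in direction~2 ensuring the singular term $\tfrac{a}{2}\rho\int_\rho^{\rho_0}s^{-2}\dif s$ is the one that survives the limit.
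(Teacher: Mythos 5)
Your proof is correct and follows essentially the same route as the paper's: direction~1 by splitting $G$ at a threshold below which $g(s)\le\varepsilon s^{-2}$ (the paper uses $1/m$ in place of $\varepsilon$), and direction~2 by contradiction from an eventual lower bound $g(\rho)\ge\tfrac{a}{2}\rho^{-2}$, whose contribution forces $\liminf_{\rho\to0}\rho G(\rho)\ge\tfrac{a}{2}>0$. Your closing remark that monotonicity of $\rho^{2}g(\rho)$ upgrades the $\liminf$ to a limit is also correct and is in fact a slightly more direct substitute for the paper's Lemma~\ref{sec:well-posedn-isentr}, though it is not part of the statement at hand.
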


\begin{lemma}\label{sec:well-posedn-isentr}
  Let $g \in C^1(\mathbb{R}^+, \mathbb{R}^+)$, $g\geq0$ and $\liminf_{\rho \to 0} \rho^2g(\rho) = 0$.  Let also $\left( \rho^2g(\rho) \right)^\prime \geq 0$.  Then, $\limsup_{\rho \to 0} \rho^2 g(\rho) = \liminf_{\rho \to 0} \rho^2 g(\rho) = 0$.

\begin{proof}
  We prove by contradiction.  Let $\limsup_{\rho \to 0} \rho^2 g(\rho) > \liminf \rho^2 g(\rho)$.  Then it is easily seen that $\liminf_{\rho \to 0} \left( \rho^2g(\rho) \right)^\prime = -\infty< 0$ resulting in a contradiction.
\end{proof}
\end{lemma}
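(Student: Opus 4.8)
The plan is to recognise that the hypothesis $\bigl(\rho^{2}g(\rho)\bigr)'\ge 0$ says exactly that the auxiliary function $h(\rho):=\rho^{2}g(\rho)$ is non-decreasing on $\mathbb{R}^{+}$, while $h\ge 0$ and $h\in C^{1}(\mathbb{R}^{+})$ follow immediately from the corresponding properties of $g$. First I would observe that a non-decreasing function that is bounded below has a finite right-hand limit at the left endpoint of its domain: here $\lim_{\rho\to 0^{+}}h(\rho)$ exists and equals $\inf_{\rho>0}h(\rho)\in[0,\infty)$. Since this genuine limit exists, the upper and lower limits at $0$ coincide with it, so $\limsup_{\rho\to 0}h(\rho)=\liminf_{\rho\to 0}h(\rho)=\lim_{\rho\to 0}h(\rho)$. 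The assumption $\liminf_{\rho\to 0}\rho^{2}g(\rho)=0$ then forces this common value to be $0$, which is the claim.

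Alternatively — and this is the route that fits the spirit of Lemma~\ref{lemma:1} — one can argue by contradiction. Suppose $M:=\limsup_{\rho\to 0}h(\rho)>m:=\liminf_{\rho\to 0}h(\rho)$. Then one extracts interlaced null sequences $\rho_{1}>\sigma_{1}>\rho_{2}>\sigma_{2}>\cdots\to 0$ with $h(\rho_{j})\to M$ and $h(\sigma_{j})\to m$. On each interval $[\rho_{j+1},\sigma_{j}]$ the value of $h$ drops from something close to $M$ at the \emph{left} endpoint to something close to $m<M$ at the \emph{right} endpoint, so the mean value theorem yields a point where $h'$ equals $\bigl(h(\sigma_{j})-h(\rho_{j+1})\bigr)/(\sigma_{j}-\rho_{j+1})<0$; this already contradicts $h'\ge 0$, and since $\sigma_{j}-\rho_{j+1}\to 0$ while the numerator tends to $m-M<0$, it moreover gives $\liminf_{\rho\to 0}\bigl(\rho^{2}g(\rho)\bigr)'=-\infty$, matching the assertion used in the text.

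I do not expect a genuine obstacle: the content of the lemma is essentially the elementary fact that a monotone function converges at an endpoint of its interval. The only points needing a little care are invoking the monotone-limit theorem on the half-open interval $(0,\infty)$ rather than a compact one in the direct argument, or, in the contradiction argument, carrying out the interlacing extraction so that the combined sequence stays strictly decreasing with limit $0$ while each subsequence keeps its limiting value $M$, respectively $m$.
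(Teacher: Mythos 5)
Your proposal is correct, and in fact you give two valid arguments. Your second (contradiction) argument is precisely the paper's proof made explicit: the paper merely asserts that $\limsup_{\rho\to 0}\rho^2 g(\rho) > \liminf_{\rho\to 0}\rho^2 g(\rho)$ ``easily'' forces $\liminf_{\rho\to 0}\bigl(\rho^2 g(\rho)\bigr)'=-\infty$, and your interlaced sequences $\rho_1>\sigma_1>\rho_2>\cdots\to 0$ together with the mean value theorem on $[\rho_{j+1},\sigma_j]$ (numerator tending to $m-M<0$, denominator tending to $0^+$) is exactly the mechanism behind that assertion. Your first argument is genuinely different and simpler: since $h(\rho)=\rho^2 g(\rho)$ is non-decreasing and bounded below, $\lim_{\rho\to 0^+}h(\rho)$ exists and equals $\inf_{\rho>0}h(\rho)$, so $\limsup$ and $\liminf$ coincide automatically and the hypothesis $\liminf_{\rho\to 0}h(\rho)=0$ fixes the common value. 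This direct route avoids contradiction altogether and does not even need the differentiability of $h$, only monotonicity; what the paper's (and your second) argument buys instead is the quantitative statement about the derivative blowing down to $-\infty$, which is the form in which the contradiction is phrased in the text. Either way, the lemma is established.
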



\end{document}